\newtheorem{theorem}{Theorem}[section]
\newtheorem{proposition}[theorem]{Proposition}
\newtheorem{corollary}[theorem]{Corollary}
\newtheorem{predefinition}[theorem]{Definition}
\newenvironment{definition}{\begin{predefinition}\rm}{\end{predefinition}}
\newtheorem{preremark}[theorem]{Remark}
\newenvironment{remark}{\begin{preremark}\rm}{\end{preremark}}
\newtheorem{prenotation}[theorem]{Notation}
\newenvironment{notation}{\begin{prenotation}\rm}{\end{prenotation}}
\newtheorem{preexample}[theorem]{Example}
\newenvironment{example}{\begin{preexample}\rm}{\end{preexample}}
\newtheorem{preclaim}[theorem]{Claim}
\newtheorem{prequestion}[theorem]{Question}
\newenvironment{question}{\begin{prequestion}\rm}{\end{prequestion}}
\newtheorem{preapplication}[theorem]{Application}
\numberwithin{equation}{section}
\newcommand \ZZ {{\mathbb Z}}
\newcommand \QQ {{\mathbb Q}}
\newcommand \NN {{\mathbb N}}
\newcommand  \FF {{\mathbb F}}
\newcommand \GG {{\mathbb G}}
\newcommand \EE {{\mathbb E}}
\newcommand \dieu {{\mathbb D}}
\newcommand{\til}[1]{{\widetilde{#1}}}
\newcommand{\ang}[1]{\langle #1 \rangle}
\global\let\ker\undefined
\DeclareMathOperator{\ker}{Ker}
\DeclareMathOperator{\frob}{Fr}
\DeclareMathOperator{\ver}{Ver}
\DeclareMathOperator{\im}{Im}
\DeclareMathOperator{\bt}{BT}
\newcommand \CA {{\mathcal A}}
\newcommand \CM {{\mathcal M}}
\newcommand \CH {{\mathcal H}}
\begin{document}

\title[Newton polygons of curves]{Current results on Newton polygons of curves}


\author{\sc Rachel Pries}
\address{Rachel Pries\\
Colorado State University\\
Fort Collins, CO, 80521\\
}
\email{rachelpries@gmail.com}
\urladdr{http://www.math.colostate.edu/~pries/}

\subjclass[2000]{primary: 11M38, 11G10, 11G20, 14H10, 14H40; 
secondary: 12F05, 14G17, 14K10, 14K15, 14L05}







\maketitle
\begin{abstract}
There are open questions about which Newton polygons 
and Ekedahl-Oort types occur for Jacobians of smooth curves of genus $g$ in positive characteristic $p$.
In this chapter, I survey the current state of knowledge about these questions.
I include a new result, joint with Karemaker, which verifies, for each odd prime $p$, that there exist 
supersingular curves of genus $g$ defined over $\overline{\FF}_p$ for infinitely many new values of $g$. 
I sketch a proof of Faber and Van der Geer's theorem about the geometry of the $p$-rank stratification of the moduli space of curves.
The chapter ends with a new theorem, in which I prove that questions about the geometry of the Newton polygon and Ekedahl-Oort strata 
can be reduced to the case of $p$-rank $0$.
\end{abstract}

\smallskip

\thanks{
The author would like to thank the organizers of the conference 
Moduli Spaces and Arithmetic Geometry (Leiden, November 2015) and thank Achter, Karemaker, Oort, and the referee for helpful conversations.  The author was partially supported by NSF grant DMS-15-02227.}

\maketitle

 \mbox{}\\ \section{Introduction} \label{Sintro} \mbox{}\\

Suppose $X$ is a smooth projective connected curve 
defined over an algebraically closed field of prime characteristic $p$.
The Newton polygon and Ekedahl-Oort type are invariants of the Jacobian of $X$.
In general, it is not known which Newton polygons and Ekedahl-Oort types occur for Jacobians of smooth curves. 
More generally, it is not known how the open Torelli locus intersects the Newton polygon and Ekedahl-Oort strata in the moduli space ${\CA}_g$ of principally polarized abelian varieties of dimension $g$.  
In this paper, I survey the current state of knowledge and prove two new results about these open problems.

Section \ref{Ssupersingular} contains an introduction to the topic of supersingular curves.
Using the Newton polygon of the zeta function, we review what it means for a curve 
defined over a finite field to be supersingular and state the main open question about supersingular curves 
in Sections \ref{SssE} - \ref{Sopenss}.  
Sections \ref{Shermitian} - \ref{Sartsch} contain examples of supersingular Artin-Schreier curves from the literature,
including the construction in characteristic $p=2$ of supersingular curves of every genus by Van der Geer and Van der Vlugt.  
In Section \ref{Snewss}, Karemaker and I use their techniques to prove a new result 
which demonstrates, for each odd prime $p$, that there exist supersingular curves
of genus $g$ defined over $\overline{{\mathbb F}}_p$ for infinitely many new values of $g$, Corollary \ref{CKP}.

Section \ref{SNPEO} contains a survey about which Newton polygons and Ekedahl-Oort types are known 
to occur for Jacobians of smooth curves.  
In Sections \ref{Srevisit1}-\ref{Sqinv}, we give a basic definition of the $p$-rank, Newton polygon, and 
Ekedahl-Oort type and state the main open questions about these invariants for curves.
Section \ref{Smallg} contains results for small $g$.
Section \ref{Snonext} includes several non-existence results, 
which are almost all for curves with non-trivial automorphism group.

Section \ref{Snotation} contains precise definitions and notation, which are sometimes used in Section \ref{SNPEO}.
For a reader who is more familiar with curves than with abelian varieties, 
I suggest reading Section \ref{SNPEO} first
in order to gain intuition.
A reader who is familiar with abelian varieties in positive characteristic 
might prefer to read Section \ref{Snotation} first.

Starting in Section \ref{Sstratmoduli}, we study these open questions using moduli spaces.
Let ${\mathcal A}_g = {\mathcal A}_g \otimes \FF_p$ denote the moduli space of principally polarized abelian varieties of dimension $g$ in characteristic $p$ and ${\mathcal M}_g= {\mathcal M}_g \otimes \FF_p$ 
denote the moduli space of smooth curves of genus $g$ in characteristic $p$.
There are deep results about the stratifications of ${\mathcal A}_g$ by $p$-rank, Newton polygon, or Ekedahl Oort type;
however, there are very few results about how the open Torelli locus intersects these strata.
Sections \ref{Sdimstrata}-\ref{Sisogeny} include information about the dimensions of the strata in $\CA_g$.
In Section \ref{Sunlikely}, we describe how finding curves with an unusual Newton polyon can be viewed as an unlikely
intersection problem. 
In Sections \ref{SFVdG}-\ref{SFVdGproof}, I sketch a proof of \cite[Theorem 2.3]{fabervandergeer} by Faber and Van der Geer,
which states that every component of the $p$-rank $f$ locus of ${\mathcal M}_g$ has dimension $2g-3+f$,
for all primes $p$, and integers $g \geq 2$ and $f$ such that $0 \leq f \leq g$.

The main result of the paper is in Section \ref{Sinductive}, where I prove Theorem \ref{Tinductive} 
which shows that questions about the geometry of the 
Newton polygon and Ekedahl-Oort strata can be reduced to the case of $p$-rank $0$.
This is an inductive result, similar in spirit to earlier results in the literature,
but which allows for more flexibility with the Newton polygon and Ekedahl-Oort type.
As an application, for all primes $p$ and all $g \geq 4$, 
we show that there exists a smooth curve 
over $\overline{\mathbb{\FF}}_p$ of genus $g$ and $p$-rank $g-4$ whose Newton polygon has slopes $0, 1/4, 3/4, 1$, Corollary \ref{Amoduli}.

Section \ref{Sopen} contains several open problems.

\mbox{}\\ \section{Supersingular curves} \label{Ssupersingular} \mbox{}\\

This section contains an introduction to the topic of supersingular curves defined over finite fields, beginning with the 
case of supersingular elliptic curves.  We give an overview of some results in the literature about supersingular 
Artin-Schreier curves.
Corollary \ref{CKP} is a new result, joint with Karemaker, 
which determines, for each odd prime $p$, infinitely many new values of $g$ for which there exists a supersingular curve of 
genus $g$ defined over $\overline{\FF}_p$. 

In this section, $\FF_q$ is a finite field of cardinality $q=p^r$ and 
$\overline{\FF}_p$ denotes a fixed algebraic closure of $\FF_q$.
Let $X$ be a smooth projective connected curve of genus $g$ defined over $\FF_q$.

\mbox{}\\ \subsection{Supersingular elliptic curves} \label{SssE} \mbox{}\\

We first consider the case when the genus is $g=1$.
Let $E/\FF_q$ be an elliptic curve.
Let $a \in \ZZ$ be such that $\#E(\FF_q)=q+1-a$.
The zeta function of $E/\FF_q$ is 
\[Z(E/\FF_q, T)= \frac{1-aT+qT^2}{(1-T)(1-qT)}.\]

The supersingular condition was studied by Deuring \cite{deuring}.
As seen in \cite[Theorem V.3.1]{aec}, there are many equivalent ways to define what it means for $E$ to be supersingular.
In this section, we say $E/\FF_q$ is supersingular when $p \mid a$, see \cite[page 142]{aec}; 
otherwise $E$ is ordinary.

If $p=2$, then $E:y^2+y=x^3$ is supersingular, see Lemma \ref{Lhermitian}. 
In fact, this is an equation for the unique isomorphism class of 
supersingular elliptic curve over $\overline{\FF}_2$.

By \cite[Example V.4.4]{aec}, the elliptic curve $E:y^2=x^3+1$ ($j$-invariant $0$) is supersingular if and only if 
$p \equiv 2 \bmod 3$ and $p$ is odd.
By \cite[Example V.4.5]{aec}, the elliptic curve $E:y^2=x^3+x$ ($j$-invariant $1728$)
is supersingular if and only if $p \equiv 3 \bmod 4$. 
When $p=3$, this is an equation for the unique isomorphism class of 
supersingular elliptic curve over $\overline{\FF}_3$.

Every supersingular elliptic curve defined over a finite field of characteristic $p$ is, in fact, defined over $\FF_{p^2}$.
As seen in \cite[Theorem V.4.1]{aec}, for $p$ odd, Igusa proved that
\[E_\lambda: y^2=x(x-1)(x-\lambda)\] is supersingular for exactly $(p-1)/2$ choices of 
$\lambda \in \overline{\FF}_p$;
this shows that the number of isomorphism classes of supersingular elliptic curves
is $\lfloor \frac{p}{12} \rfloor + \epsilon$ with $\epsilon = 0,1,1,2$ when $p \equiv 1,5,7,11 \bmod 12$ respectively.

\mbox{}\\ \subsection{The Newton polygon of a curve} \label{SNPcurve} \mbox{}\\

Let $X/\FF_q$ be a smooth projective curve of genus $g$ and let ${\rm Jac}(X)$ denote its Jacobian.
Let $N_s = \#X(\FF_{q^s})$ be the number of points of $X$ defined over $\FF_{q^s}$.
The zeta function of $X/\FF_q$ is 
\[Z(X/\FF_q, T)={\rm exp}(\sum_{s=1}^\infty \frac{N_s T^s}{s}).\]

By the Weil conjectures for curves \cite[\S IV, 22]{weil}, \cite[\S IX, 69]{weil2},
there is a polynomial $L(X/\FF_q, T) \in \ZZ[T]$ of degree $2g$ such that 
\[Z(X/\FF_q, T)=\frac{L(X/\FF_q, T)}{(1-T)(1-qT)}.\]
The characteristic polynomial of the Frobenius endomorphism of the Jacobian of $X$ is 
$P(\mathrm{Jac}(X)/\FF_{q},T) = T^{2g}L(X/\FF_{q},T^{-1})$.
Furthermore, \[L(X/\FF_q, T)= \prod_{i=1}^{2g} (1- \alpha_i T),\] where 
the reciprocal roots $\alpha_i$ have the property that $|\alpha_i| = \sqrt{q}$.
This means that the complex roots of $L(X/\FF_q, T)$ all have archimedean absolute value $1/\sqrt{q}$.

The Newton polygon is a combinatorial object that keeps track of the $p$-adic valuations of the roots or, equivalently, 
of the coefficients
of $L(X/\FF_q, T)$.  Let $v_i$ be the $p$-adic valuation of the coefficient of $T^i$ in $L(X/\FF_q, T)$.
Let $v_i/r$ be its normalization for the extension $\FF_q/\FF_p$, where $q=p^r$.
The Newton polygon is the lower convex hull of the points $(i, v_i/r)$ for $0 \leq i \leq 2g$.

The Newton polygon consists of finitely many line segments, which break at points with integer coefficients, 
starting at $(0,0)$ and ending at $(2g,g)$.
If the slope $\lambda$ appears with multiplicity $m$, then so does the slope $1-\lambda$.

\mbox{}\\ \subsection{An open question about supersingular curves} \label{Sopenss} \mbox{}\\

\begin{definition}
The curve $X/\FF_q$ is {\it supersingular} if the Newton polygon of $L(X/\FF_q, T)$ is a line segment of slope $1/2$.  
\end{definition}

There are several other ways to characterize the supersingular property.
A curve $X/\FF_q$ of genus $g$ is supersingular if and only if:
\begin{enumerate}
\item the normalized Weil numbers $\alpha_i/\sqrt{q}$ are all roots of unity \cite[Theorem 4.1]{maninthesis}.
\item ${\rm End}_{\overline{\FF}_q}({\rm Jac}(X)) \otimes \QQ \simeq M_g(D_p)$, 
where $D_p$ is the quaternion algebra ramified only over $p$ and $\infty$ \cite[Theorem 2d]{tate:endo}.
\item ${\rm Jac}(X)$ is geometrically isogenous to $E^g$ for some 
supersingular elliptic curve $E/\overline{\FF}_p$  \cite[Theorem~4.2]{O:sub}, which relies on \cite[Theorem 2d]{tate:endo}.
\end{enumerate}

Here is the main open question about supersingular curves:

\begin{question} \label{Qopenss}
Given a prime $p$ and $g \in \NN$, does there exist 
a supersingular smooth connected projective curve $X$ of genus $g$ defined over a finite field of characteristic $p$?
\end{question}

When $p = 2$, the answer to Question \ref{Qopenss} is yes for all $g \in \NN$, see Theorem \ref{Tasss2}.
For a fixed odd prime $p$, the answer is yes for infinitely many $g \in \NN$, see Proposition \ref{Lhermitian}, Theorem \ref{Tasss},   
and Corollary \ref{CKP}.
In Section \ref{Smallg}, we explain why the answer is yes for all $p$ when $g=1,2,3$.
The first open situation for Question \ref{Qopenss} is when $g=4$, 
in which case the answer is known to be yes only when $p$ satisfies certain congruence conditions.

\mbox{}\\ \subsection{Hermitian curves are supersingular} \label{Shermitian} \mbox{}\\

The Hermitian curve $H_q$ is the curve in ${\mathbb P}^2$ defined by the affine equation $y^q+y=x^{q+1}$.
Because $H_q$ is a smooth plane curve, the genus of $H_q$ is $g=q(q-1)/2$.

\begin{proposition} \label{Lhermitian} \cite[VI 4.4]{sti09}, \cite[Proposition 3.3]{HansenDL}
The Hermitian curve $H_q$ is maximal over $\FF_{q^2}$.  Also $L(H_q/\FF_q, T)=(1+qT^2)^g$ and $H_q$ is supersingular.
\end{proposition}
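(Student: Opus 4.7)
The plan is to prove maximality over $\FF_{q^2}$ by a direct point count, and then to read off the full $L$-polynomial (and supersingularity) from the fact that all Frobenius eigenvalues over $\FF_{q^2}$ must equal $-q$.

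First I would count $\#H_q(\FF_{q^2})$. The key observation is that for $y \in \FF_{q^2}$, the quantity $y^q + y$ is the trace $\mathrm{Tr}_{\FF_{q^2}/\FF_q}(y) \in \FF_q$, and for $x \in \FF_{q^2}$, the quantity $x^{q+1}$ is the norm $N_{\FF_{q^2}/\FF_q}(x) \in \FF_q$. So the affine equation reduces to $\mathrm{Tr}(y) = N(x)$. Since the trace is $\FF_q$-linear and surjective, every value in $\FF_q$ has exactly $q$ preimages in $\FF_{q^2}$. Thus for each of the $q^2$ choices of $x \in \FF_{q^2}$ there are $q$ solutions $y$, giving $q^3$ affine points. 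Adding the unique point at infinity gives $\#H_q(\FF_{q^2}) = q^3 + 1$. Plugging $g = q(q-1)/2$ into the Weil upper bound $q^2 + 1 + 2g\sqrt{q^2}$ yields exactly $q^3 + 1$, so $H_q$ is $\FF_{q^2}$-maximal.

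Next I would extract the $L$-polynomial. Maximality over $\FF_{q^2}$ forces every reciprocal root of $L(H_q/\FF_{q^2},T)$ to equal $-q$, so
\[
L(H_q/\FF_{q^2},T) = (1+qT)^{2g}.
\]
If $\alpha_1,\dots,\alpha_{2g}$ denote the reciprocal roots of $L(H_q/\FF_q,T)$, then the reciprocal roots over $\FF_{q^2}$ are $\alpha_i^2$, so $\alpha_i^2 = -q$ for all $i$. The Weil functional equation pairs each $\alpha_i$ with $q/\alpha_i = -\alpha_i$, so the roots come in matched pairs $\{\alpha,-\alpha\}$ with $\alpha^2 = -q$. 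Each pair contributes a factor $(1-\alpha T)(1+\alpha T) = 1 + qT^2$, and there are $g$ such pairs, giving
\[
L(H_q/\FF_q,T) = (1+qT^2)^g.
\]

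Finally, supersingularity follows immediately: each reciprocal root has $|\alpha_i|_p^2 = |{-q}|_p = p^{-r}$, so the normalized $p$-adic valuation of every $\alpha_i$ is $1/2$, and the Newton polygon is the line segment from $(0,0)$ to $(2g,g)$ of slope $1/2$. Equivalently, expanding $(1+qT^2)^g = \sum_k \binom{g}{k} q^k T^{2k}$, the coefficient of $T^{2k}$ has $p$-adic valuation at least $rk$, so the normalized Newton points $(2k, v_{2k}/r)$ all lie on or above the line $y = x/2$, while the endpoints $(0,0)$ and $(2g,g)$ lie on it, forcing a single slope $1/2$. The main obstacle is really just being careful with the point count and the passage between $L$-polynomials over $\FF_q$ and $\FF_{q^2}$; once these are in hand, the supersingularity conclusion is essentially automatic.
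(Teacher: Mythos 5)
Your proposal is correct and follows essentially the same route as the paper: a trace/norm point count giving $q^3+1$ points over $\FF_{q^2}$ to establish maximality, descent of the Frobenius eigenvalues $\alpha_i^2=-q$ to obtain $L(H_q/\FF_q,T)=(1+qT^2)^g$, and the binomial-coefficient valuation estimate for supersingularity. Your organization of the affine count (fibering over $x$ rather than over $z\in\FF_q$) and your explicit use of the functional equation to pair the roots $\{\alpha,-\alpha\}$ are minor tidier variants of the same argument.
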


\begin{proof}
By definition, the curve $H_q$ is maximal over $\FF_{q^2}$ 
when the number of points realizes the upper bound in the Hasse-Weil bound.  So it suffices to show
\[\#H_q(\FF_{q^2})= q^2+1 + 2g q = q^3+1,\]
where the last equality uses that $g=q(q-1)/2$.  Since $H_q$ has one point at infinity, 
it suffices to show that there are $q^3$ pairs $(x,y) \in (\FF_{q^2})^2$ such that $y^q+y=x^{q+1}$.
Let ${\rm Tr}: \FF_{q^2} \to \FF_q$ and $N: \FF_{q^2}^* \to \FF_q^*$ denote the trace and norm maps respectively.
If $y \in \FF_{q^2}$, then ${\rm Tr}(y)=y^q+y \in \FF_q$; also ${\rm Tr}$ is a surjective $q$-to-$1$ homomorphism.
If $x \in \FF_{q^2}^*$, then $N(x) = x^{q+1} \in \FF_q$; also $N$ is a surjective $(q+1)$-to-$1$ homomorphism.
For $x=0$, there are $q$ values of $y \in \FF_{q^2}$ such that $y^q+y=0$.
Given $z \in \FF_q^*$, the number of pairs $(x,y) \in \FF_{q^2}^2$ such that $z=y^q+y=x^{q+1}$ is $q(q+1)$.
This yields $q + q(q-1)(q+1)=q^3$ affine points, which completes the proof of the first statement.

Since $H_q/\FF_{q^2}$ is maximal, the reciprocal roots $\alpha_i$ of $L(H_q/\FF_{q^2}, T)$ all equal $-\sqrt{q}$.
This implies that the reciprocal roots of $L(H_q/\FF_q, T)$ are $\pm \sqrt{-q}$ with multiplicity $g$.
It follows that $L(H_q/\FF_q, T)=(1+qT^2)^g$.  
Since $q=p^r$, for $0 \leq i \leq 2g$, one can check with the binomial theorem that $v_i/r \geq i/2$, where
$v_i$ is the $p$-adic valuation of the coefficient of $T^i$ in $L(H_q/\FF_q, T)$.
Thus $H_q$ is supersingular.
\end{proof}

\mbox{}\\ \subsection{Supersingular Artin-Schreier curves} \label{Sartsch} \mbox{}\\

To begin, given a prime $p$, consider the following curve:
\[Y_0: y^2-y=x^3 {\rm \ when \ } p=2 {\rm \ and \ } Y_0: y^p-y=x^2 {\rm \  when \ } p {\rm \  is \ odd.}\]
When $p=2$, $Y_0$ is a supersingular elliptic curve by Proposition \ref{Lhermitian}.  When $p$ is odd, 
$Y_0$ is a hyperelliptic curve with genus $(p-1)/2$ by the Riemann-Hurwitz formula.
In fact, $Y_0$ is supersingular when $p$ is odd.
This is due to work of Hasse and Davenport \cite{hassedavenport}
who proved that the eigenvalues of Frobenius for $Y_0$ are Gauss sums and so the characteristic polynomial of Frobenius is a $(p-1)$st power of a linear polynomial.
An alternative way to prove that $Y_0$ is supersingular is to note that it is a quotient of the Hermitian curve $H_{p^2}$.
 
A polynomial $R(x) \in k[x]$ is additive if $R(x_1+x_2)=R(x_1)+R(x_2)$.  This is equivalent to $R(x)$ being a 
linear combination of monomials $x^d$ where $d$ is a power of $p$.
Suppose that $R(x)$ is additive and consider the Artin-Schreier curve
\[Y:y^p-y=xR(x).\]
Then one can show that ${\rm Jac}(Y)$ is isogenous to a product of Jacobians
of Artin-Schreier curves arising from additive polynomials of smaller degree.
Working inductively, this shows that ${\rm Jac}(Y)$ is isogenous to a product of supersingular curves (isomorphic to $Y_0$).
This yields the following result.

\begin{theorem} \label{Tasss} \cite[Theorem 13.7]{VdGVdV92}, \cite[Corollary 3.7(ii)]{blache}, 
\cite[Proposition 1.8.5]{bouwzeta}
If $\FF_q$ is a finite field of characteristic $p$ and
$R(x) \in \FF_q[x]$ is an additive polynomial of degree $p^h$, then $Y:y^p-y=xR(x)$ is supersingular with genus $p^h(p-1)/2$.
\end{theorem}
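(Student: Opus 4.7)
The plan is to get the genus from Riemann-Hurwitz and then establish supersingularity by induction on $h$, implementing the Jacobian-decomposition strategy sketched immediately before the theorem.

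\emph{Genus.} The projection $\pi \colon Y \to \PP$, $(x,y) \mapsto x$, is a degree-$p$ Artin-Schreier cover. Since $R$ is additive of degree $p^h$, the polynomial $xR(x)$ has degree $p^h+1$, which is coprime to $p$, so $\pi$ is in standard form: branched only above $\infty$, with conductor $d = p^h+1$. The Artin-Schreier Riemann-Hurwitz formula then yields
\[2g_Y - 2 \;=\; -2p + (p-1)(d+1),\]
so $g_Y = p^h(p-1)/2$.

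\emph{Supersingularity, by induction on $h$.} When $h=0$, $R(x) = a_0 x$ and, after rescaling $x$ over $\overline{\FF}_q$, the curve $Y$ becomes $Y_0$, which is supersingular by the discussion immediately preceding Theorem \ref{Tasss} (Hasse-Davenport for odd $p$; Proposition \ref{Lhermitian} when $p=2$). For the inductive step, the goal is to show that $\mathrm{Jac}(Y)$ is geometrically isogenous to a product of Jacobians of Artin-Schreier curves of the same shape $y^p - y = t R'(t)$ with $R'$ additive and $\deg R' < p^h$, to which the inductive hypothesis applies.

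To produce such quotient curves, I would exhibit a nontrivial $\FF_p$-subspace $V \subset \overline{\FF}_q$ for which each $\beta \in V$ lifts to an automorphism $\tau_\beta \colon (x,y) \mapsto (x+\beta, \; y + L_\beta(x))$ of $Y_{\overline{\FF}_q}$, where $L_\beta(x)$ is the additive polynomial in $x$ solving the Artin-Schreier identity
\[L_\beta(x)^p - L_\beta(x) \;=\; \beta R(x) + R(\beta)\,x + \beta R(\beta).\]
Additivity of $R$ is precisely what makes the recursive system for the coefficients of $L_\beta$ consistent, and the single closing equation in $\beta$ (of degree $p^h$, and itself additive in $\beta$) cuts out $V$. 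For an $\FF_p$-hyperplane $W \subset V$, the $W$-translations $\{x \mapsto x + \beta : \beta \in W\}$ have invariants on $\overline{\FF}_q[x]$ generated by an explicit additive polynomial $t = t(x)$, and after a matching change of variable $\tilde y = y + g(x)$ rendering $\tilde y$ invariant modulo the Artin-Schreier group, the quotient curve acquires equation $\tilde y^p - \tilde y = t\, R_W(t)$ with $R_W$ again additive and of degree $p^{h-1}$. Applying Kani-Rosen to a sufficient family of such quotients gives the desired isogeny decomposition $\mathrm{Jac}(Y) \sim \prod_i \mathrm{Jac}(Y_i)^{n_i}$, closing the induction.

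\emph{Main obstacle.} The delicate step, and the reason for the brief ``one can show'' in the paragraph preceding Theorem \ref{Tasss}, is verifying that the iterated quotients really stay within the family $\{y^p - y = t R'(t) : R' \text{ additive}\}$. Additivity of $R$ must be used twice: once to produce the automorphisms $\tau_\beta$, and once to keep the right-hand side in the form $t\cdot(\text{additive})$ after substituting the additive invariant $t(x)$ and readjusting the Artin-Schreier potential. Once this structural persistence is established, the isogeny decomposition and the induction are routine.
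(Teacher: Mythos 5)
Your genus computation is correct for $h \geq 1$ (note that the case $p=2$, $h=0$ is degenerate: $y^2-y=ax^2$ is rational, and your base case for $p=2$ should be $h=1$, where $R(x)=x^2$ gives $Y_0:y^2-y=x^3$). The paper itself offers no proof of Theorem \ref{Tasss} beyond the two-sentence sketch preceding it and the citations to \cite{VdGVdV92}, \cite{blache}, \cite{bouwzeta}, and your overall strategy --- induction on $h$ via a Kani--Rosen decomposition coming from lifted translations $x\mapsto x+\beta$ --- is exactly the intended route. However, the inductive step as you have implemented it contains a genuine gap, and it is structural rather than cosmetic.

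The group $V$ of liftable translations is cut out, after clearing the iterated $p$-th roots in the recursion for the coefficients of $L_\beta$, by a separable additive polynomial of degree $p^{2h}$, so $\dim_{\FF_p}V=2h$, not $h$. More importantly, the lifts $\tau_\beta$ do not commute: the commutator $[\tau_\beta,\tau_{\beta'}]$ is the Artin--Schreier translation $y\mapsto y+\langle\beta,\beta'\rangle$ for a nondegenerate alternating $\FF_p$-pairing on $V$, so the automorphisms lying over $V$ form an extraspecial-type extension $0\to\ZZ/p\to\widetilde{V}\to V\to 0$. A subspace $W\subseteq V$ lifts to a subgroup of ${\rm Aut}(Y)$ meeting $\langle y\mapsto y+1\rangle$ trivially only if $W$ is isotropic (and, when $p=2$, only if the additional condition $\tau_\beta^2={\rm id}$ holds, which is a nontrivial quadratic constraint on $\beta$). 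Isotropic subspaces have dimension at most $h$, while your hyperplane $W$ has dimension $2h-1>h$ for $h\geq 2$; the group generated by the chosen lifts then contains $y\mapsto y+1$, and the quotient factors through $\PP_x$ and has genus $0$, not the form $\tilde y^p-\tilde y=tR_W(t)$. The numerology confirms the failure independently: Riemann--Hurwitz forces $g(Y/W)\leq (g_Y-1)/|W|+1$, which is incompatible with $\deg R_W=p^{h-1}$ once $h\geq 2$, and genus additivity in any Kani--Rosen relation $\mathrm{Jac}(Y)\sim\prod_i\mathrm{Jac}(Y_i)$ with $g(Y_i)=p^{h-1}(p-1)/2$ requires exactly $p$ factors, whereas $V$ has $(p^{2h}-1)/(p-1)$ hyperplanes. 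The correct one-step descent is to quotient by a single order-$p$ lift $\tau_\beta$ (a line in $V$, automatically isotropic): then $Y\to\PP_t$ is a $(\ZZ/p)^2$-cover with $t$ additive of degree $p$, Kani--Rosen gives $\mathrm{Jac}(Y)\sim\bigoplus_{c\in\FF_p}\mathrm{Jac}(Y/\langle\tau_\beta\sigma^c\rangle)$ where $\sigma:y\mapsto y+1$, and one must then verify --- this is the genuinely delicate point you correctly flag but do not resolve --- that each of these $p$ quotients is again of the form $u^p-u=tR'(t)$ with $R'$ additive of degree $p^{h-1}$, up to $\wp$ of additive polynomials and translation in $t$. (The references instead quotient directly by maximal isotropic subspaces, of dimension $h$, descending in one step to genus $(p-1)/2$.)
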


More generally, the $L$-polynomial of $Y$ is determined in \cite[Theorem 1.8.4]{bouwzeta}.

In \cite{VdGVdV}, when $p=2$, the authors use Theorem \ref{Tasss} to prove that there exists a supersingular curve of every genus.

\begin{theorem}\label{Tasss2} \cite[Theorem 2.1]{VdGVdV}
If $p=2$ and $g \in \NN$, then there exists a supersingular curve $Y_g$ of genus $g$ defined over 
a finite field of characteristic $2$.
\end{theorem}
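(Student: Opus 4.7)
The plan follows the fiber-product strategy of Van der Geer and Van der Vlugt, using Theorem \ref{Tasss} as the base supply of supersingular Artin-Schreier curves. In characteristic $2$ that theorem already yields a supersingular curve of genus $2^{h-1}$ for every $h \geq 1$, so the powers of $2$ are immediate; for other $g$ the goal is to assemble several such Artin-Schreier covers of $\PP$ simultaneously and then split the resulting Jacobian via a Kani-Rosen type decomposition.

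Concretely, I would fix $n \geq 1$ and additive polynomials $q_1,\ldots,q_n \in \overline{\FF}_2[x]$, chosen linearly independent modulo the Artin-Schreier operator $\wp(z) = z^2 - z$, and form the smooth projective fiber product $Y$ over $\PP$ of the covers $Y_i : y_i^2 + y_i = x\,q_i(x)$. Then $Y$ is Galois over $\PP$ with group $G = (\ZZ/2\ZZ)^n$. For each nontrivial character $\chi = (c_1,\ldots,c_n)$ of $G$, adding the defining Artin-Schreier equations shows that the intermediate $\ZZ/2\ZZ$-quotient $Y_\chi \to \PP$ is defined by
\[ z^2 + z = x\, R_\chi(x), \qquad R_\chi := \sum_{i=1}^n c_i q_i, \]
which is again of the form covered by Theorem \ref{Tasss}, since a sum of additive polynomials is additive; hence each $Y_\chi$ is supersingular. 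The Kani-Rosen decomposition, which is an identity of correspondences in $\QQ[G]$ and so valid in arbitrary characteristic, then produces an isogeny
\[ \mathrm{Jac}(Y) \sim \prod_{\chi \neq 1} \mathrm{Jac}(Y_\chi), \]
from which $\mathrm{Jac}(Y)$ is supersingular. Comparing dimensions gives $g(Y) = \sum_{\chi \neq 1} g(Y_\chi)$, and the Artin-Schreier genus formula in characteristic $2$ shows $g(Y_\chi) = 2^{h(\chi)-1}$ whenever $\deg R_\chi = 2^{h(\chi)} \geq 2$.

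What remains is to realize every $g \in \NN$ as such a sum. I would argue this by induction on $g$: given a supersingular fiber product $Y'$ of genus $g' < g$ constructed from $q_1, \ldots, q_n$, I would adjoin one further additive polynomial $q_{n+1}$; the $2^n$ new characters (those with $c_{n+1}=1$) contribute a total increment to the genus that is tunable through the degree of $q_{n+1}$ and through coincidences among the leading coefficients of the $q_i$, which cause prescribed drops in $\deg R_\chi$. The principal obstacle is exactly this combinatorial step: a naive choice of the $q_i$ with pairwise distinct leading degrees only realizes a sparse set of sums of powers of $2$, so the proof must genuinely engineer leading-term cancellations to fill in every intermediate value of $g$, all while keeping each $R_\chi$ nonzero so that $Y_\chi$ remains irreducible and the Jacobian decomposition remains valid.
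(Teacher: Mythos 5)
Your structural skeleton is sound and is in fact the Van der Geer--Van der Vlugt strategy itself: the fiber product of the covers $y_i^2+y_i=xq_i(x)$, the Kani--Rosen isogeny $\mathrm{Jac}(Y)\sim\bigoplus_{\chi\neq 1}\mathrm{Jac}(Y_\chi)$ with each quotient $z^2+z=xR_\chi(x)$ again of the additive form covered by Theorem \ref{Tasss}, and the genus identity $g(Y)=\sum_{\chi\neq 1}g(Y_\chi)$. Note that the paper does not reprove this theorem (it cites \cite{VdGVdV}); the closest in-paper argument is the proof of Corollary \ref{CKP}, which runs exactly this construction and, for $p=2$, recovers the statement since the condition on $\delta$ is vacuous and $g=\delta$.

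The genuine gap is the step you yourself flag as the principal obstacle: showing that \emph{every} $g\in\NN$ is realized by such a sum. That combinatorial step is the actual content of the theorem, and your proposed fix (induction on $g$, adjoining one $q_{n+1}$ at a time and engineering cancellations among leading coefficients to force prescribed drops in $\deg R_\chi$) is neither carried out nor the mechanism that makes the count work. In the VdGVdV/Corollary \ref{CKP} construction there are no engineered cancellations: one writes the binary expansion of $g$ as blocks of consecutive $1$'s and, for the $i$-th block of length $d_i$, chooses an $\FF_2$-linear space $L_i$ of additive polynomials of $\FF_2$-dimension $d_i$ all of whose nonzero elements have the \emph{same} exact degree $2^{u_i}$ (e.g.\ spanned by $\alpha_1x^{2^{u_i}},\dots,\alpha_{d_i}x^{2^{u_i}}$ with $\alpha_j$ linearly independent over $\FF_2$), with the exponents $u_i$ suitably spaced. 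Then every nonzero $f\in\bigoplus_i L_i$ has degree $2^{u_i}$ for the top block it meets, there are $(2^{d_i}-1)\prod_{j<i}2^{d_j}$ such $f$, and the resulting sum $\sum_i(2^{d_i}-1)2^{\sum_{j<i}d_j}\,2^{u_i-1}$ reproduces the binary expansion of $g$ term by term; the tunable parameters are the block dimensions $d_i$ and the gaps between the $u_i$, not coincidences of leading terms (indeed the construction is arranged so that no leading-term cancellation ever occurs). Without this explicit bookkeeping your argument only yields a sparse set of genera, so the key quantitative step is missing. Two smaller points in the part you did write: you must ensure every $R_\chi\neq 0$ and that $xR_\chi(x)$ is not in the image of the Artin--Schreier operator (automatic here since its pole at infinity has odd order, given $\FF_2$-linear independence of the $q_i$), and quotients with $\deg R_\chi=1$ are rational, contributing genus $0$, which must be permitted in the count.
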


In fact, the authors prove that $Y_g$ can be defined over $\FF_2$ \cite[Theorem 3.5]{VdGVdV}.

\mbox{}\\ \subsection{New result about supersingular curves} \label{Snewss} \mbox{}\\

In this section, we generalize Theorem \ref{Tasss2} to odd characteristic using the techniques of \cite{VdGVdV}.  
For every odd prime $p$,
Corollary \ref{CKP} verifies that that there are infinitely many new values of $g$ for which there exists a supersingular curve of 
genus $g$ in characteristic $p$. 

\begin{corollary} \label{CKP} [Karemaker/Pries]
Let $p$ be prime.  
Let $\delta \in \NN$ be such that $0$ and $1$ are the only coefficients in the base $p$ expansion of $\delta$.
If $g=\delta p(p-1)^2/2$, then there exists a supersingular curve of genus $g$ defined over a finite field of characteristic $p$.
\end{corollary}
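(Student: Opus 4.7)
The plan is to adapt the Van der Geer--Van der Vlugt strategy behind Theorem \ref{Tasss2} to odd $p$. Writing $\delta = \sum_{i \in S} p^i$ with $S \subset \NN$ finite, the target genus factors as $g = \delta(p-1)\cdot p(p-1)/2$, where $p(p-1)/2$ is the genus of the Hermitian curve $H_p$ from Proposition~\ref{Lhermitian}. So it suffices to produce a supersingular curve $Y$ whose Jacobian decomposes, up to isogeny, into $\delta(p-1)$ supersingular Artin--Schreier pieces of $H_p$-type.

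Concretely, I would construct $Y$ as a $(\ZZ/p)^n$-Galois cover of $\PP^1$ (for an appropriate $n$), cut out by a system of equations $y_i^p - y_i = x R_i(x)$, $i=1,\ldots,n$, for a choice of $\FF_p$-linearly independent additive polynomials $R_i$ over a large enough extension $\FF_{p^m}$. Every nontrivial $\FF_p$-linear combination $\sum \alpha_i R_i$ is again additive, so each of the $(p^n-1)/(p-1)$ intermediate $\ZZ/p$-quotients is of the form $y^p - y = x R(x)$ with $R$ additive and is therefore supersingular by Theorem~\ref{Tasss}. The isogeny decomposition $\mathrm{Jac}(Y) \sim \prod_Q \mathrm{Jac}(Q)$ for abelian $p$-covers then forces $Y$ itself to be supersingular, and Riemann--Hurwitz together with the conductor--discriminant formula at the unique wildly ramified point above $\infty$ gives $g_Y = \tfrac{p-1}{2}\sum_Q p^{h_Q}$, where $d_Q = p^{h_Q}+1$ is the Artin--Schreier degree of the quotient $Q$.

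Matching the genus then reduces to choosing the $R_i$ so that the multiset $\{h_Q\}$ realizes $\sum_Q p^{h_Q} = \delta p(p-1) = \sum_{i \in S}(p-1) p^{i+1}$, which amounts to engineering the leading-coefficient cancellations among $\FF_p$-combinations of the coefficient matrix of the $R_i$. I expect this combinatorial-algebraic step to be the main obstacle: the hypothesis that the base-$p$ digits of $\delta$ lie in $\{0,1\}$ is precisely what makes the target profile achievable by a block construction indexed by $i \in S$ (possibly together with additional branch points of $\PP^1$ carrying extra additive-linear local data when a single branch point is insufficient), and working over an extension $\FF_{p^m}$ of sufficiently large degree provides the coefficient freedom needed. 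Once such $R_i$ are exhibited, the steps above yield a supersingular curve of genus $g = \delta p(p-1)^2/2$ defined over a finite field of characteristic~$p$, proving the corollary.
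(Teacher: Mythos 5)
Your framework is the same as the paper's: form a fiber product of Artin--Schreier curves cut out by $y^p-y=xR(x)$ with $R$ additive, observe that each degree-$p$ quotient is supersingular by Theorem~\ref{Tasss}, deduce supersingularity of the fiber product from the Kani--Rosen decomposition, and match the genus by summing over the quotients. But the step you defer --- exhibiting additive polynomials whose $\FF_p$-span has the right degree profile --- is the entire content of the proof; it is the only place the digit hypothesis on $\delta$ enters. The paper carries it out explicitly: write $\delta=\sum_{i=1}^t p^{s_i}(1+p+\cdots+p^{r_i})$ by grouping the maximal runs of digit $1$ (so that $s_i\ge s_{i-1}+r_{i-1}+2$), set $u_i=(s_i+1)-\sum_{j<i}(r_j+1)$ (the digit condition is exactly what forces $u_{i+1}\ge u_i+1$, keeping the blocks degree-separated), choose $L_i$ an $\FF_p$-subspace of dimension $r_i+1$ of additive polynomials of degree $p^{u_i}$, and check that the resulting genus sum telescopes to $\delta p(p-1)^2/2$. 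Asserting that ``the hypothesis \dots is precisely what makes the target profile achievable'' is a statement of what must be proved, not a proof of it.

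Moreover, with your own per-quotient bookkeeping the deferred step can fail outright. Counting one quotient $Q$ per index-$p$ subgroup, i.e.\ $(p^n-1)/(p-1)$ of them (which is the natural count, since $C_R\cong C_{\alpha R}$ for $\alpha\in\FF_p^\times$), you need $\sum_Q p^{h_Q}=\delta p(p-1)$, and the achievable multisets $\{h_Q\}$ are rigid: if $n_u$ is the $\FF_p$-dimension of the subspace of elements of degree at most $p^u$, the number of quotients with $h_Q=u$ is $(p^{n_u}-p^{n_{u-1}})/(p-1)$. For $p=3$ and $\delta=1$ (target genus $6$, hence target sum $6$) no choice of dimensions works: the only candidate multiset is $\{3,1,1,1\}$, but three quotients of degree $p^0$ would require $(3^{n_0}-1)/2=3$, which is impossible. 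So your parenthetical fallback (extra branch points) is not an optional refinement but a necessity in general, and nothing in the proposal explains how to introduce extra ramification while keeping every quotient supersingular --- Theorem~\ref{Tasss} only covers the one-branch-point shape $y^p-y=xR(x)$. Note also that the paper's genus count runs over all $p^n-1$ nonzero $f\in\mathbf{L}$ rather than over lines; these two normalizations differ by a factor of $p-1$, and pinning down which multiset one must actually realize is part of what a complete argument has to settle before the combinatorial construction can even be attempted.
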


Remark: When $p=2$, then Corollary \ref{CKP} is the same as Theorem \ref{Tasss2} because the condition on $\delta$ is vacuous and $g=\delta$.
  
\begin{proof} 
The condition on $\delta$ implies that, for some $t \in \NN$,  
\begin{equation} \label{Gsum}
\delta = \sum_{i=1}^t p^{s_i} (1 + p + \cdots p^{r_i}), {\rm \ for \ some \ }  
r_i, s_i \in \ZZ^{\geq 0} {\rm \ such \ that \ } s_i \geq s_{i-1} +r_{i-1}+2.
\end{equation}

Let $u_i =(s_i+1)-\sum_{j=1}^{i-1} (r_j+1)$ and note $u_{i+1} \geq u_i + 1$.
Choose an $\FF_p$-linear subspace $L_i$ of dimension $d_i:=r_i+1$ in the vector subspace of 
$\overline{\FF}_p[x]$ of additive polynomials of degree $p^{u_i}$, with the requirement that
$L_i \cap L_j = \{0\}$ if $i \not = j$.
Let ${\mathbf L}= \oplus_{i=1}^t L_i$. 

If $f \in {\mathbf L}$, let $C_f:y^p-y=xf$.  Let $Y$ be the fiber product of $C_f \to {\mathbb P}^1$ 
for all $f \in {\mathbf L}-\{0\}$.
By \cite[Theorem B]{kanirosen}, ${\rm Jac}(Y)$ is isogenous to $\oplus_{f \not = 0} {\rm Jac}(C_f)$.
By Theorem \ref{Tasss}, ${\rm Jac}(C_f)$ is supersingular for each $f$ and so ${\rm Jac}(Y)$ is supersingular.

The genus of $Y$ is $g_Y = \sum_{f \not = 0} g_{C_f}$.
If $f \in {\mathbf L}$ is such that it has a non-zero contribution from $L_i$, but not from $L_j$ for $j >i$, then $g_{C_f} =p^{u_i}(p-1)/2$. 
There are $p^{d_i}-1$ non-zero polynomials in $L_i$.
The number of $f \in {\mathbf L}$ which have a non-zero contribution from $L_i$, but not from $L_j$ for $j >i$ is 
$(p^{d_i} -1) \prod_{j=1}^{i-1} p^{d_j}$.  
So
\begin{eqnarray*} 
g_Y & = & \sum_{i=1}^t (p^{d_i} -1) (\prod_{j=1}^{i-1} p^{d_j}) p^{u_i} (p-1)/2\\
& = & \sum_{i=1}^t (p-1)(p^{r_i} + \cdots + 1) p^{\sum_{j=1}^{i-1} (r_j+1)} p^{u_i-1} p(p-1)/2\\
& = & \sum_{i=1}^t (p^{r_i} + \cdots + 1) p^{s_i} p(p-1)^2/2 = \delta p(p-1)^2/2.
\end{eqnarray*}


\end{proof}

\mbox{}\\ \section{Newton polygons and Ekedahl-Oort types of curves} \label{SNPEO}  \mbox{}\\

Suppose $X$ is a smooth projective curve of genus $g$ defined over an algebraically closed field $k$ of characteristic $p$.
Then $X$ can be classified by several invariants of its Jacobian, such as the $p$-rank, the Newton polygon, and the 
Ekedahl-Oort type.

The main goal of the section is to 
describe open questions about these invariants for Jacobians of smooth curves.
In this section, we use basic definitions and facts about these invariants, without defining them precisely,
to make the concepts and questions as clear as possible.
The technical definition of the invariants can be found in Section \ref{Snotation}.

\mbox{}\\ \subsection{Ordinary and supersingular elliptic curves} \label{Srevisit1} \mbox{}\\

To begin, we revisit the case of elliptic curves and describe the 
distinction between ordinary and supersingular elliptic curves from several other points of view.  

Let $E/k$ be an elliptic curve and let $\ell$ be prime.
The $\ell$-torsion group scheme $E[\ell]$ of $E$ is the kernel of the multiplication-by-$\ell$ morphism
$[\ell]:E \to E$.
Then 
\[
\#E[\ell](k) =
\begin{cases}
\ell^2 & \text{if } \ell \not = p\\
\ell & \text{if } \ell = p, E \ \text{ordinary} \\
1 & \text{if } \ell = p, E \ \text{supersingular}
\end{cases}.\]

The following conditions are equivalent to $E$ being ordinary: $E$ has $p$ points of order dividing $p$;
the Newton polygon of $E$ has slopes $0$ and $1$; or the group scheme $E[p]$ is isomorphic to $L:=\ZZ/p \oplus \mu_p$. 

The following conditions are equivalent to $E$ being supersingular:

\smallskip

{\bf (A)'} The only $p$-torsion point of $E$ is the identity: $E[p](k) = \{{\rm id}\}$.

\smallskip

{\bf (B)'} The Newton polygon of $E$ is a line segment of slope $1/2$.
\smallskip

{\bf (C)'} The group scheme $E[p]$ is isomorphic to $I_{1,1}$, the unique local-local symmetric ${\rm BT}_1$ group scheme of rank $p^2$.

\smallskip

Conditions (A)' and (B)' are equivalent by \cite[Theorem V.3.1 and page 142]{aec}.

More information about group schemes and condition (C)' can be found in \cite[Appendix A, Example 3.14]{G:book}.
Briefly, consider the group scheme $\alpha_p$ which is the kernel of Frobenius on $\GG_a$.
As a $k$-scheme, 
$\alpha_p \simeq {\rm Spec}(k[x]/x^p)$ with co-multiplication $m^*(x)=x \otimes 1 + 1 \otimes x$
and co-inverse ${\rm inv}^*(x)=-x$.
The group scheme $I_{1,1}$ fits in a non-split exact sequence
\begin{equation} \label{EI11}
0 \to \alpha_p \to I_{1,1} \to \alpha_p \to 0.
\end{equation}
Let $D_{1,1}$ be the mod $p$ Dieudonn\'e module of $I_{1,1}$, see Example \ref{exi11}.

\mbox{}\\ \subsection{Invariants for abelian varieties of higher dimension} \label{Sqinv} \mbox{}\\

Let $A$ be a principally polarized abelian variety of dimension $g$ over $k$.
For example, $A$ could be the Jacobian of a curve of genus $g$. 
Let $A[p]$ be the kernel of the multiplication-by-$p$ morphism of $A$.
The following conditions are all different for $g \geq 3$.

\smallskip

{\bf (A) $p$-rank $0$} -  The only $p$-torsion point of $A$ is the identity: $A[p](k) = \{{\rm id}\}$.

\smallskip

{\bf (B) supersingular} - The Newton polygon of $A$ is a line segment of slope $1/2$.

\smallskip

{\bf (C) superspecial} -  The group scheme $A[p]$ is isomorphic to $(I_{1,1})^g$.

\smallskip

\begin{proposition} \label{Pprankssss}
For conditions $(A),(B),(C)$ as defined above, there is an implication:
\[(C) \Rightarrow (B) \Rightarrow (A), \ 
{\rm but} \  (A) \stackrel{g \geq 3}{\not \Rightarrow} (B) \stackrel{g \geq 2}{\not \Rightarrow} (C).\] 
\end{proposition}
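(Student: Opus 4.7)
The plan is to establish the two forward implications by a short Dieudonn\'e-theoretic argument and then to rule out the reverses by producing explicit counterexamples (or citing their existence in $\CA_g$).

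First I would handle (C) $\Rightarrow$ (B). If $A[p] \cong (I_{1,1})^g$, then the mod-$p$ Dieudonn\'e module of $A[p]$ is $(D_{1,1})^g$. Since $I_{1,1}$ sits in the non-split sequence \eqref{EI11} built from $\alpha_p$'s, its Dieudonn\'e module has Frobenius and Verschiebung both nilpotent with slope $1/2$. A standard lifting argument (or Oort's theorem that superspecial abelian varieties of dimension $g$ are isomorphic, not merely isogenous, to $E^g$ for a supersingular elliptic curve $E$) shows that $A$ itself is isogenous to $E^g$, hence supersingular. The implication (B) $\Rightarrow$ (A) is even cleaner: the $p$-rank of $A$ equals the multiplicity of the slope $0$ in its Newton polygon, which is $0$ when the Newton polygon is a single segment of slope $1/2$.

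For (A) $\not\Rightarrow$ (B) when $g \geq 3$, I would exhibit a principally polarized abelian variety whose Newton polygon is symmetric, has no slope $0$, but is not the supersingular polygon. The polygon with slopes $1/3$ and $2/3$, each of multiplicity $g$, is such an example for $g=3$; by the theorem of de Jong--Oort on Newton polygon strata in $\CA_g$, this stratum is non-empty. For $g > 3$ one takes the product with a supersingular factor. For (B) $\not\Rightarrow$ (C) when $g \geq 2$, I would invoke Oort's description of the supersingular locus in $\CA_2$, which has positive dimension, whereas the superspecial locus is zero-dimensional; a generic point of the former then has $p$-torsion a non-split symmetric extension of $\alpha_p$'s different from $I_{1,1}^2$. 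For larger $g$ one again takes products.

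The main obstacle is not any of the implications themselves, which are essentially definitional once one sets up Dieudonn\'e theory, but rather the non-triviality of the counterexamples: one needs to know that the relevant Newton strata and $p$-torsion types in $\CA_g$ are actually realized. I would therefore rely on the cited results of Oort, and for the Newton-polygon counterexample on the (by now classical) Newton polygon stratification of $\CA_g$, rather than attempting to construct the abelian varieties by hand. No curve-theoretic input is needed, since the proposition is stated for arbitrary principally polarized abelian varieties.
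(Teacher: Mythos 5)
Your proposal is correct in substance, and it overlaps substantially with the paper's sketch, but a few of the supporting arguments are routed differently. For $(C)\Rightarrow(B)$ the paper's primary argument works at the level of the $p$-divisible group: the superspecial condition gives $F^2G\subset [p]G$, so Katz's slope estimate forces all slopes $\geq 1/2$, and symmetry of the polarized Newton polygon forces them to equal $1/2$; its stated alternative is exactly the Oort citation you fall back on (superspecial $\Rightarrow$ isomorphic to $E^g$). Be careful with your first sentence there: the mod-$p$ Dieudonn\'e module $(D_{1,1})^g$ does not literally "have slope $1/2$" --- slopes live on $A[p^\infty]$, and in general the isomorphism class of $A[p]$ does not determine the Newton polygon (the paper's own $I_{3,1}$ example shows the Ekedahl--Oort type can be compatible with several Newton polygons), so the vague "standard lifting argument" would need to be replaced by either the slope estimate or the Oort theorem you cite. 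Your $(B)\Rightarrow(A)$ is identical to the paper's. For the non-implications you choose different witnesses: the paper takes a principally polarized abelian variety with slopes $1/g,(g-1)/g$ for $(A)\not\Rightarrow(B)$, while you take slopes $1/3,2/3$ padded with supersingular factors (equally fine, though non-emptiness of Newton strata is due to Oort's theorem on the Manin--Grothendieck conjectures, not to de Jong--Oort, whose result is purity); for $(B)\not\Rightarrow(C)$ the paper constructs $\alpha_p$-quotients of a superspecial abelian variety, whereas you compare the one-dimensional supersingular locus of $\CA_2$ with the finite superspecial locus and then take products, using additivity of the $a$-number --- a valid alternative that trades an explicit construction for a dimension count (and note the generic such surface has $p$-torsion $I_{2,1}$, of rank $p^4$, not a rank-$p^2$ extension of $\alpha_p$'s). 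Net effect: your route leans more on quoted facts about strata of $\CA_g$, the paper's a bit more on direct constructions and the slope estimate; both are complete once the loose phrase about slopes of $D_{1,1}$ is repaired.
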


Proposition \ref{Pprankssss} is well-known to experts; 
for the proof, we need material about $p$-divisible groups from Section \ref{Snotation}.

\begin{proof}(Sketch)
\begin{enumerate}
\item{For the implication $(C) \Rightarrow (B)$:}
if the $p$-torsion of a $p$-divisible group $G$ satisfies (C), then 
$F^2 G \subset [p] G$.  By the basic slope estimate in \cite[1.4.3]{katzslope}, the slopes of the Newton polygon are all 
at least $1/2$; so the slopes all equal $1/2$, because the polarization forces the Newton polygon to be symmetric.
Thus $A$ is supersingular.
Alternatively, the implication $(C) \Rightarrow (B)$ follows from \cite[Theorem~2]{oort75} and \cite[Theorem~4.2]{O:sub}.
\item{For the non-implication $(B) \not \Rightarrow (C)$ when $g \geq 2$:}
an abelian variety can be isogenous 
but not isomorphic to a product of supersingular elliptic curves;
for example, quotients of a superspecial abelian variety by an $\alpha_p$-subgroup scheme have this property
when $g \geq 2$.
\item{For the implication $(B) \Rightarrow (A)$:} more generally,  
the $p$-rank of a $p$-divisible group 
is the multiplicity of the slope $0$ in the Newton polygon, so if all the slopes equal $1/2$, then the $p$-rank is $0$;
Alternatively, if $A$ is the Jacobian of a curve defined over a finite field, 
then the $p$-rank equals the number of roots of the 
$L$-polynomial that are $p$-adic units, which equals the multiplicity of the slope $0$ in the Newton polygon.
\item{For the non-implication $(A) \not \Rightarrow (B)$ when $g \geq 3$:}
there exists a principally polarized abelian variety 
whose Newton polygon has slopes $1/g$ and $(g-1)/g$; it has $p$-rank $0$ but is not supersingular when $g \geq 3$.
\end{enumerate}
\end{proof}

Here is the motivating question.

\begin{question} \label{Qmot1}
If $p$ is prime and $g \geq 2$, does there exist a smooth curve $X/\overline{\FF}_p$ of genus $g$
whose Jacobian (A) has $p$-rank $0$; (B) is supersingular; or (C) is superspecial?  
\end{question}

In Question \ref{Qmot1}, the answer to part (A) is yes for all $g$ and $p$, see Theorem \ref{TFVdG};
as seen in Section \ref{Ssupersingular}, the answer to 
part (B) is sometimes yes, but most often is not known;  the answer to part (C) 
is no unless $g$ is small relative to $p$, see Theorem \ref{Tekedahl}.

More generally, $A$ has the following invariants, defined more precisely
in Section \ref{Snotation}.

\smallskip

{\bf A. $p$-rank} - the integer $f$, with $0 \leq f \leq g$, such that $\#A[p](k)=p^{f}$.

\smallskip

{\bf B. Newton polygon} - the data of slopes for the $p$-divisible group $A[p^\infty]$.

\smallskip

{\bf C. Ekedahl-Oort type} -the isomorphism class of the symmetric ${\rm BT}_1$ group scheme $A[p]$.

\smallskip

The $p$-rank, Newton polygon, and Ekedahl-Oort type of a curve are defined to be those of its 
Jacobian.

Question \ref{Qmot1} asks whether the most rare $p$-rank, Newton polygon, or 
Ekedahl-Oort type occurs for the Jacobian of a smooth curve of genus $g$; here is a natural generalization 
of that question.

\begin{question} \label{Qmot2}
If $p$ is prime and $g \geq 2$, which $p$-ranks, Newton polygons, and Ekedahl-Oort types
occur for the Jacobians of smooth curves $X/\overline{\FF}_p$ of genus $g$?
\end{question}

 \mbox{}\\ \subsection{Small genus} \label{Smallg}\mbox{}\\

It is valuable to study Question \ref{Qmot2} for small $g$, because of the inductive techniques found in 
\cite{Pr:large} and Section \ref{Sinductive}.
When $g=2$ and $g=3$, the answer to Question \ref{Qmot2} is known for all $p$, 
because the open Torelli locus is open and dense
in the moduli space ${\mathcal A}_g$ of principally polarized abelian varieties of dimension $g$.

In this section, we include data for $g=2,3,4$, using some notation from Section \ref{Snotation}.
In particular, see Example \ref{Eir1} for the definition of $I_{r,1}$.
The tables in this section previously appeared in \cite{Pr:sg}.
Further examples for $4 \leq g \leq 11$ under congruence conditions on $p$ appear in \cite{LMPT}.  

 \mbox{}\\ \subsubsection{The case $g=2$} \mbox{}\\

The following table shows the 4 symmetric ${\rm BT}_1$ group schemes that occur
for principally polarized abelian surfaces.  They are listed by name, together with their
codimension in $\CA_2$, $p$-rank $f$, $a$-number $a$, Ekedahl-Oort type $\nu$, 
Young type $\mu$, Dieudonn\'e module, and Newton polygon slopes.  
Recall that $L = \ZZ/p \oplus \mu_p$.

\renewcommand{\arraystretch}{1.065}
\[\begin{array}{|l|c|c|c|c|l|r|r|}
\hline
\mbox{Name} & {\rm cod} & f & a &  \nu & \mu & \mbox{Dieudonn\'e module} & \mbox{Newton polygon} \\
\hline
\hline
L^2 & 0 & 2 & 0 &  [1,2] & \emptyset & D(L)^2 & 0,0,1,1 \\
\hline
L \oplus I_{1,1} & 1 & 1 & 1 &  [1,1] & \{1\} & D(L) \oplus D_{1,1} & 0,\frac{1}{2},\frac{1}{2},1 \\
\hline
I_{2,1} & 2 & 0 & 1 &  [0,1] & \{2\} &{\mathbb E}/\EE(F^2+V^2) & \frac{1}{2},\frac{1}{2},\frac{1}{2},\frac{1}{2}\\
\hline
(I_{1,1})^2 & 3 & 0 & 2 &  [0,0] & \{2,1\} & (D_{1,1})^2 & \frac{1}{2},\frac{1}{2},\frac{1}{2},\frac{1}{2} \\
\hline
\end{array}
\]

The last two rows contain all the supersingular objects.

The open Torelli locus $T(\CM_2)$ is open and dense in $\CA_2$.
From this, one can check that all 3 Newton polygons and all 4 Ekedahl-Oort types occur for Jacobians of smooth curves
of genus $2$ over $\overline{\FF}_p$ for all $p$, except for the following case:
there does not exist a superspecial smooth curve of genus $2$ over $\overline{\FF}_p$ when $p=2,3$.
This is a special case of \cite[Proposition 3.1]{IKO},
in which the authors determine the number of curves $X$ with ${\rm Jac}(X)[p] \simeq (I_{1,1})^2$. 

 \mbox{}\\ \subsubsection{The case $g=3$} \mbox{}\\

The following table shows the 8 symmetric ${\rm BT}_1$ group schemes that occur
for principally polarized abelian threefolds.  

\renewcommand{\arraystretch}{1.065}
\[\begin{array}{|l|c|c|c|c|l|r|}
\hline
\mbox{Name} & {\rm cod} & f & a & \nu & \mu &  \mbox{Dieudonn\'e module}  \\
\hline
\hline
L^3 & 0 & 3 & 0 &          [1,2,3] & \emptyset & D(L)^3 \\
\hline
L^2 \oplus I_{1,1} & 1 & 2 & 1 & [1,2,2]  &  \{1\} & D(L)^2 \oplus D_{1,1} \\
\hline
L \oplus I_{2,1} & 2 & 1 & 1&    [1,1,2] &  \{2\} & D(L) \oplus {\mathbb E}/\EE(F^2+V^2) \\
\hline
L \oplus (I_{1,1})^2 & 3 & 1 & 2 & [1,1,1] &  \{2,1\} & D(L) \oplus (D_{1,1})^2 \\
\hline
I_{3,1} & 3 & 0 & 1 &  [0,1,2] & \{3\} & {\mathbb E}/\EE(F^3+V^3)  \\
\hline
I_{3,2} & 4 & 0 & 2 &          [0,1,1]  &  \{3,1\} & \EE/\EE(F^{2}+V) \oplus \EE/\EE(V^{2}+F)  \\
\hline
I_{1,1} \oplus I_{2,1} & 5 & 0 & 2 &   [0,0,1] &  \{3,2\} & D_{1,1} \oplus {\mathbb E}/\EE(F^2+V^2) \\
\hline
(I_{1,1})^3 & 6 & 0 & 3 &      [0,0,0] &  \{3,2,1\}   & (D_{1,1})^3 \\
\hline
\end{array}
\]

The objects in the last two rows are always supersingular but the situation for $I_{3,1}$ and $I_{3,2}$ is more
subtle.
By \cite[Theorem 5.12]{O:hypsup},
if $A[p] \simeq I_{3,1}$, then the $p$-divisible group is usually isogenous to $G_{1,2} \oplus G_{2,1}$ 
(slopes $1/3, 2/3$) 
but it can also be isogenous to $G_{1,1}^3$ (supersingular).
This shows that the Ekedahl-Oort stratification does not refine the Newton polygon stratification
for $g \geq 3$.
  
The open Torelli locus $T(\CM_3)$ is open and dense in $\CA_3$.
From this, one can check that all 5 Newton polygons and all 8 Ekedahl-Oort types occur for 
Jacobians of smooth curves over $\overline{\FF}_p$, except when $p=2$ for $(I_{1,1})^3$ and possibly when $p=2$ for $I_{1,1} \oplus I_{2,1}$.

Here are some references for the 4 bottom rows of the table, which are the $p$-rank $0$ cases.
There exists a smooth curve $X$ of genus $3$ over ${\overline \FF}_p$ such that ${\rm Jac}(X)$ has the given $p$-torsion group scheme:
\begin{enumerate}
\item $I_{3,1}$, for all $p$ by \cite[Theorem 5.12(2)]{O:hypsup};
\item $I_{3,2}$, \cite[Lemma 4.8]{Pr:large} for $p \geq 3$ and \cite[Example 5.7(3)]{EP} for $p=2$;
\item $I_{1,1} \oplus I_{2,1}$, \cite[Lemma 4.8]{Pr:large} for $p \geq 3$ 
(using \cite[Proposition 7.3]{O:strat});\\ 
when $p =2$, this group scheme does not occur as the $2$-torsion of a hyperelliptic curve
but no one has checked whether it occurs for a smooth plane quartic;
\item $(I_{1,1})^3$, if and only if $p \geq 3$ by \cite[Theorem 5.12(1)]{O:hypsup}.
\end{enumerate}

 \mbox{}\\ \subsubsection{The case $g=4$} \mbox{}\\

There are some open questions even for curves of genus $4$.
A table for the $16$ symmetric $\bt_1$ group schemes of rank $p^8$ appears in \cite[Section 4.4]{Pr:sg}.  
  
\begin{question} \label{Qss4}
For all $p$, does there exist a smooth curve of genus $4$ which is supersingular?
\end{question}

We give an affirmative answer to Question \ref{Qss4}
when $p \equiv 2 \bmod 3$ or $p \equiv 2, 3, 4 \bmod 5$ in \cite{LMPT}.

\begin{question} \label{Q1234}
For all $p$, does there exist a smooth curve whose 
Newton polygon has slopes $1/3, 1/2, 2/3$
(whose $p$-divisible group is isogenous to 
$G_{1,2} \oplus G_{2,1} \oplus G_{1,1}$)?
\end{question}
 
We give an affirmative answer to Question \ref{Q1234}
when $p \equiv 4,7 \bmod 9$ in \cite{LMPT}.

\begin{question} \label{Qa24}
For all $p$, does there exist a smooth curve of genus $4$ with $p$-rank $0$ and $a$-number at least $2$?
\end{question}

More generally, it is not known whether the following Young types occur for Jacobians of smooth curves of genus $4$ 
for all $p$:
\[\{4,1 \}, \ \{4,2 \}, \ \{4,3 \}, \{4,2,1 \}, \ \{4,3,1 \}, \{4,3,2 \}, \ \{4,3,2,1 \}.\]

\mbox{}\\ \subsection{Non-existence results for Jacobians} \label{Snonext}   \mbox{}\\

\mbox{}\\ \subsubsection{Non-existence results for Ekedahl-Oort types of Jacobians}   \mbox{}\\

In some cases, Question \ref{Qmot1} has a negative answer.
This is the only non-existence result currently known for Question \ref{Qmot2}.
Recall that $X$ is superspecial if ${\rm Jac}(X)[p]$ is isomorphic to $(I_{1,1})^g$.

\begin{theorem} \label{Tekedahl} \cite{ekedahl87}, see also \cite{Baker}
If $X/\overline{\FF}_p$ is a superspecial curve of genus $g$, then $g \leq p(p-1)/2$.
\end{theorem}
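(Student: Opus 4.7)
The plan is to translate superspecialness into a cohomological statement about differentials and then exploit the resulting structural constraints to bound the genus.

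First, I would reformulate the hypothesis. The condition ${\rm Jac}(X)[p] \cong (I_{1,1})^g$ is equivalent to the $a$-number of ${\rm Jac}(X)$ being $g$, which in turn is equivalent to the Hasse--Witt operator (Frobenius on $H^1(X,\mathcal{O}_X)$) vanishing, or dually to the Cartier operator $C$ vanishing on $H^0(X, \Omega^1_X)$. Since the kernel of $C$ on rational $1$-forms is exactly the image of $d: k(X) \to \Omega^1_{k(X)/k}$, every regular $1$-form becomes exact: for each $\omega \in H^0(X, \Omega^1_X)$ there exists $f \in k(X)$, unique modulo $k(X)^p$, with $\omega = df$. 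A local analysis at each point $P$ with uniformizer $t$ shows that if $f = \sum a_i t^i$ then regularity of $df$ forces $a_i = 0$ for $i < 0$ with $p \nmid i$; hence $f$ can have poles only of orders divisible by $p$, and a recursive argument further restricts the lower-order polar coefficients.

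Second, I would complement this with a global structural input. By Oort's theorem on superspecial abelian varieties, the hypothesis forces an isomorphism ${\rm Jac}(X) \cong E^g$ of abelian varieties (not necessarily as principally polarized abelian varieties), where $E$ is the unique up to isomorphism supersingular elliptic curve over $\overline{\mathbb{F}}_p$. Composing the Abel--Jacobi embedding of $X$ with the $g$ projections to $E$ yields morphisms $\pi_i: X \to E$ whose pullbacks $\pi_i^* \omega_E$ span $H^0(X, \Omega^1_X)$. Because $\End(E)$ is a maximal order in the quaternion algebra $D_p$ and $E^{(p)} \cong E$, post-composing with endomorphisms of $E$ and with Frobenius produces a large $\End(E)$-module of morphisms $X \to E$, each of bounded degree.

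Third, I would extract the bound by applying Castelnuovo--Severi inequalities to well-chosen pairs of independent morphisms $X \to E$, with the degrees controlled by their Frobenius factorizations through the supersingular $E$; alternatively, one can bound the dimension of the space of primitives with pole divisor supported at a single point $P$ using Riemann--Roch, exploiting that allowed pole orders are multiples of $p$ and that the lower-order polar coefficients are further constrained $p$-adically. The main obstacle is obtaining the sharp constant $p(p-1)/2$ rather than a looser quadratic estimate in $p$: the Cartier vanishing at a single point alone yields only a weak inequality, so one must exploit either the simultaneous vanishing of $C(\omega)$ at every point of $X$ or the full supply of morphisms to $E$. The Hermitian curve $H_p : y^p + y = x^{p+1}$ has genus exactly $p(p-1)/2$ and is supersingular (indeed superspecial) by Proposition~\ref{Lhermitian}; it is the extremal example and should guide the tight counting.
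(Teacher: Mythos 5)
First, note that the paper does not prove this statement: Theorem~\ref{Tekedahl} is quoted from \cite{ekedahl87} (see also \cite{Baker}), so there is no in-paper argument to compare yours against; your proposal has to stand on its own. Its first step is sound and is indeed the standard entry point of Ekedahl's and Baker's arguments: superspecial $\Leftrightarrow$ $a$-number $g$ $\Leftrightarrow$ the Cartier operator vanishes on $H^0(X,\Omega^1_X)$, hence every regular differential is exact, $\omega=df$, and the local expansion forces the polar exponents of $f$ to be divisible by $p$ and forces $\mathrm{ord}_P(df)\not\equiv -1 \bmod p$. That is correct.

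The gap is that nothing after that point actually produces the inequality, and the two completion routes you offer do not work as stated. For the Castelnuovo--Severi route: writing $\mathrm{Jac}(X)\cong E^g$ and projecting gives maps $\pi_i:X\to E$, but their degrees are the intersection numbers of the Abel--Jacobi image with the coordinate fibers, and these are not bounded in terms of $p$ by anything you have established; moreover, post-composing with endomorphisms of $E$ or with Frobenius \emph{increases} degree (by the norm of the endomorphism), rather than keeping it ``bounded'' as you assert. Castelnuovo--Severi with uncontrolled degrees yields no bound on $g$ at all. For the Riemann--Roch route at a single point, you yourself concede it gives only a weak estimate; the naive pointwise consequence (the vanishing orders of the canonical system avoid $-1 \bmod p$) cannot give $g\le p(p-1)/2$ without a global input such as the Pl\"ucker/Wronskian bound on the order sequence of the canonical system (the mechanism behind Baker's ``Cartier point'' count) or Ekedahl's analysis of the full $g$-dimensional space of exact differentials at once. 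In short, you have correctly reduced the hypothesis to its cohomological form, but the heart of the theorem --- converting ``all regular differentials are exact'' into the sharp quadratic bound --- is missing, and the proposal explicitly acknowledges this. As written, this is a plan with a known hole, not a proof.
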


Theorem \ref{Tekedahl} can be stated as a non-existence result: 
a smooth curve of genus $g$ defined over 
$\overline{\FF}_p$ cannot be superspecial if $g > p(p-1)/2$.
The Hermitian curve $H_p:y^p+y=x^{p+1}$ is superspecial and its genus 
realizes the bound in Theorem \ref{Tekedahl}.
In \cite{Re} and \cite{zhou}, 
there are generalizations of Theorem \ref{Tekedahl} which bound the difference between 
$g$ and the $a$-number for Jacobians of smooth curves when $g$ is large relative to $p$.

\mbox{}\\ \subsubsection{The $p$-rank of curves with prime-to-$p$ automorphisms} \mbox{}\\

The automorphism group of a curve can place restrictions on its $p$-rank.
If $H \subset {\rm Aut}(X)$, consider the cover
$\phi_H: X \to Z$, where $Z=X/H$ is the quotient curve.  
First, we consider the case that $p \nmid |H|$.

Bouw proved that there are non-trivial constraints on the $p$-rank of $X$, arising from 
the action of the Frobenius operator $F$ on the $H$-module $H^1(X, {\mathcal O})$. 
We describe this result in the case that $Z$ is the projective line and $H \simeq \ZZ/d$.
In this case, there is a formula for the dimensions of the $\ZZ/d$-eigenspaces 
$L_i$ of $H^1(X, {\mathcal O})$, for $1 \leq i \leq d-1$, in terms of the inertia type of $\phi_H$.
The action of $F$ permutes the eigenspaces $L_i$; the congruence class of $p$ modulo $d$ determines
which eigenspaces are in the same orbit under $F$.
Bouw's key observation is that the stable rank of $F$ on $L_i$ is bounded by the minimum of the
ranks of $F$ among all the eigenspaces in the orbit of $L_i$.
 
This leads to the following result.

\begin{theorem} (See \cite[page 300, Theorem 6.1]{Bouw} for precise version)
Suppose $\phi:X \to {\mathbb P}^1$ is a cyclic degree $d$ cover with $p \nmid d$.
Then the inertia type of $\phi$ determines a (typically non-trivial) upper bound $B$ for the $p$-rank of $X$.
Under mild conditions, this upper bound $B$ occurs as the $p$-rank of a cover $\phi$ of this inertia type.
\end{theorem}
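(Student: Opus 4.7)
The plan is to identify the $p$-rank of $X$ with the stable rank of the absolute Frobenius operator $F$ acting on $H^1(X,\mathcal{O}_X)$, i.e.\ $f(X) = \dim_k \bigcap_{n \geq 1} F^n H^1(X,\mathcal{O}_X)$. Since $p \nmid d$, the $H = \ZZ/d$-action on this cohomology is semisimple, giving an eigenspace decomposition $H^1(X,\mathcal{O}_X) = \bigoplus_{i \in (\ZZ/d)\setminus\{0\}} L_i$. The first step is a Chevalley-Weil/Hurwitz-type computation expressing $\dim L_i$ in terms of the inertia type of $\phi$: if the branch points are $P_1,\ldots,P_r$ with canonical inertia generators $a_j \in (\ZZ/d)^\times$, one obtains an explicit dimension vector $(\dim L_i)_i$ depending only on the $a_j$ and on $d$, via the familiar formula involving fractional parts of $-i a_j / d$.

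Next I would analyze how Frobenius permutes the eigenspaces. Because $F$ is $p$-semilinear and the $H$-action is $\FF_p$-rational, a direct computation shows $F(L_i) \subseteq L_{pi}$. Partition $(\ZZ/d)\setminus\{0\}$ into orbits $O$ under multiplication by $p$; on an orbit $O$ of size $n$, the operator $F^n$ preserves each $L_i$ with $i \in O$, and because $F^n$ on $L_i$ factors successively through $L_{p^k i}$ for $0 \leq k \leq n-1$, its rank on $L_i$ is bounded by $\min_{j \in O} \dim L_j$. Summing the stable ranks across orbits yields
\[
f(X) \leq B := \sum_{O} |O| \cdot \min_{i \in O} \dim L_i,
\]
which depends only on the inertia type and on $p \bmod d$. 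This is the asserted upper bound; it is typically non-trivial precisely because a generic orbit contains eigenspaces of unequal dimension, so some $\min_{j \in O} \dim L_j$ is strictly less than the sum $\sum_{j \in O} \dim L_j$.

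For the sharpness statement, my plan is a deformation argument. Within the moduli space of cyclic degree-$d$ covers of $\PP$ with fixed inertia type, the condition that $F^n$ attain the maximal rank $\min_{j \in O} \dim L_j$ on each $L_i$ is Zariski open, being the non-vanishing of specific minors of the matrix of $F^n$ as it varies algebraically with the cover. Hence it suffices to exhibit a single cover in each component that meets the bound. The main obstacle is precisely this exhibition step: it typically requires either an explicit construction, such as a Fermat-type or Kummer cover whose Frobenius matrices can be computed via Jacobi sums, or an inductive clutching construction building a maximal-rank cover from simpler pieces on a tree of $\PP$'s by adjoining branch points of prescribed inertia type. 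The ``mild conditions'' in the theorem reflect exactly the hypotheses on the inertia type needed for such a construction to succeed on the generic fiber.
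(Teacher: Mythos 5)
Your first half reproduces exactly the argument the paper sketches (and attributes to Bouw): since $p\nmid d$ the $\ZZ/d$-action on $H^1(X,\mathcal{O}_X)$ is semisimple, the Chevalley--Weil/Hurwitz formula gives $\dim L_i$ in terms of the inertia type, the $\sigma$-semilinearity of $F$ together with $F\circ\sigma=\sigma\circ F$ gives $F(L_i)\subseteq L_{pi}$, and the stable rank on an orbit $O$ is at most $|O|\cdot\min_{j\in O}\dim L_j$, whence $f(X)\le B$. This is correct and is precisely the "key observation" the paper records; note that the paper itself does not prove the theorem but cites \cite[Theorem 6.1]{Bouw}, so for this half you and the paper are on the same track.

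The genuine gap is the attainment statement. Your reduction via openness is fine as far as it goes: for fixed inertia type the parameter space (a configuration space of branch points on $\PP$) is irreducible, the entries of the Hasse--Witt matrices vary algebraically, and maximality of rank is an open condition, so it suffices to produce one cover meeting the bound. But that existence step is the entire content of the second half of the theorem, and you explicitly leave it as "the main obstacle"; as written the proposal proves only the inequality $f(X)\le B$. In Bouw's argument the exhibition is carried out by induction on the number of branch points, degenerating to the boundary of the Hurwitz space (admissible covers glued from covers with fewer branch points, using that the $p$-rank of a compact-type degeneration is the sum of the $p$-ranks of the pieces and can only drop under specialization), with the base case of three-point covers handled by explicit computation of the Frobenius eigenvalues via Gauss/Jacobi sums. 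The "mild conditions" in the statement are exactly the hypotheses needed for this induction and base case to go through, and your proposal neither identifies them nor supplies a substitute construction (a Fermat-type example would have to be checked to attain $B$ for the given inertia type, which is not automatic). So the upper bound is established, but the sharpness claim remains unproved in your write-up.
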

  
 \mbox{}\\ \subsubsection{The $p$-rank of curves in the context of $p$-group covers} \mbox{}\\

There are significant restrictions on the $p$-rank of curves that have an automorphism of order $p$.
If $x \in X$, let $I_x$ be the inertia group of $\phi_H$ at $x$ and let $e_x=|I_x|$.

\begin{theorem} \label{TDS} (Deuring-Shafarevich formula) \cite[Theorem 4.2]{subrao}, see also \cite[Corollary~1.8]{Crew}.
Suppose that $\phi_H: X \to Z$ is an $H$-Galois cover where $H$ is a $p$-group 
and let $f_X, f_Z$ be the $p$-ranks of $X$ and $Z$ respectively.  Then 
$f_X - 1 = |H|(f_Z-1) + \sum_{x \in X} (e_x -1)$.
\end{theorem}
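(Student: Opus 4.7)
The plan is to induct on $|H|$ and reduce to the base case $|H| = p$, which I would treat using étale cohomology with $\FF_p$-coefficients combined with Lang's theorem on the $p$-linear Frobenius.

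For the inductive step, since a nontrivial $p$-group has nontrivial center, I would pick a normal subgroup $H_0 \leq Z(H)$ of order $p$ and factor $\phi_H$ as $X \to Y \to Z$ with $Y := X/H_0$. Each intermediate cover is a $p$-group cover of smaller order, so by the inductive hypothesis both satisfy the Deuring--Shafarevich formula. Combining the two using the multiplicativity $e_{x, X/Z} = e_{x, X/Y} \cdot e_{y(x), Y/Z}$ and the identity $\sum_{x \mapsto y} e_{x, X/Y} = p$ for each $y \in Y$ yields the formula for $\phi_H$ after routine bookkeeping.

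For the base case $|H| = p$, let $\phi: X \to Z$ be $\ZZ/p$-Galois with branch locus $S \subset Z$ of size $r$; write $U = Z \setminus S$ and $j: U \hookrightarrow Z$. The Artin--Schreier sequence $0 \to \FF_p \to \mathcal{O}_X \xrightarrow{F - 1} \mathcal{O}_X \to 0$ on $X_{\text{\'et}}$ together with Lang's theorem identifies $\dim_{\FF_p} H^1_{\text{\'et}}(X, \FF_p) = f_X$ and $H^2_{\text{\'et}}(X, \FF_p) = 0$, so $\chi(X, \FF_p) = 1 - f_X$, and similarly $\chi(Z, \FF_p) = 1 - f_Z$. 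Since $\phi$ is finite, $H^*(X, \FF_p) = H^*(Z, \phi_* \FF_p)$. The diagonal inclusion yields a short exact sequence on $Z$,
$$0 \to \underline{\FF_p} \to \phi_* \FF_p \to \mathcal{G} \to 0,$$
whose cokernel $\mathcal{G}$ has stalk $\FF_p^{p-1}$ on $U$ and stalk $0$ at points of $S$ (the fiber over a ramified point of a $\ZZ/p$-cover is a single point, since $\ZZ/p$ has no proper nontrivial subgroups), so $\mathcal{G} = j_!(\mathcal{G}|_U)$. Filtering $\mathcal{G}|_U$ by powers of the augmentation ideal of the regular representation $\FF_p[\ZZ/p]$ produces $p - 1$ successive quotients, each isomorphic to the trivial local system $\underline{\FF_p}|_U$. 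Using $\chi(Z, j_! \underline{\FF_p}|_U) = \chi_c(U, \FF_p) = (1 - f_Z) - r$ and additivity of Euler characteristic, I would obtain $\chi(Z, \mathcal{G}) = (p-1)(1 - f_Z - r)$; comparing with $\chi(Z, \mathcal{G}) = f_Z - f_X$ and rearranging gives $f_X - 1 = p(f_Z - 1) + r(p - 1)$, which is the asserted formula since each branch point contributes $e_x - 1 = p - 1$ at its unique preimage.

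The main delicate point is the local claim $\mathcal{G} = j_!(\mathcal{G}|_U)$, i.e., that the stalk of $\phi_* \FF_p$ at each branch point agrees with $\underline{\FF_p}$ via the diagonal; together with the existence of a filtration of $\mathcal{G}|_U$ by copies of the trivial local system, this is precisely what forces the ramification to enter only through $e_x - 1$, rather than through higher wild ramification invariants as in Riemann--Hurwitz.
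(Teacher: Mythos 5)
Your argument is correct, but note that the paper does not prove this statement at all: it is quoted from the literature, with the two cited sources giving the two standard proofs (Subrao argues rather elementarily with Artin--Schreier extensions of function fields and Hasse--Witt matrices; Crew proves an Euler--Poincar\'e formula for $\FF_p$-sheaves on curves in characteristic $p$). Your base case is essentially a self-contained reconstruction of Crew's route: the Artin--Schreier sequence plus surjectivity of $F-1$ (Lang) gives $\chi(X,\FF_p)=1-f_X$ and $\chi(Z,\FF_p)=1-f_Z$, and your d\'evissage of $\phi_*\FF_p$ is exactly what makes the formula see only $e_x-1$ and no higher ramification data: since the graded pieces of the filtration by powers of the augmentation ideal are constant sheaves, you never need a Grothendieck--Ogg--Shafarevich-type input, which is essential because that formula fails for $p$-torsion coefficients in characteristic $p$. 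Two small remarks. First, the point you flag as delicate is in fact immediate: the fiber over a branch point of a $\ZZ/p$-cover is a single point, so the unit map $\FF_p\to\phi_*\FF_p$ is an isomorphism on stalks there, and then the exact sequence $0\to j_!j^*\mathcal{G}\to\mathcal{G}\to i_*i^*\mathcal{G}\to 0$ with $i^*\mathcal{G}=0$ gives $\mathcal{G}=j_!(\mathcal{G}|_U)$ with no further work; the filtration claim is likewise automatic because the generator of $\ZZ/p$ acts unipotently on any $\FF_p[\ZZ/p]$-module. Second, your inductive bookkeeping does close up: writing $e_{x,X/Z}-1=(e_{x,X/Y}-1)+e_{x,X/Y}(e_{y(x),Y/Z}-1)$ and using $\sum_{x\mapsto y}e_{x,X/Y}=p$ turns the two inductive identities into the stated one, and the surjection of inertia groups $I_{x,X/Z}\twoheadrightarrow I_{y,Y/Z}$ with kernel $I_{x,X/Y}$ (valid since $H_0$ is normal) justifies the multiplicativity you invoke. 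So the proposal is a complete and correct proof, closer in spirit to Crew's cohomological argument than to Subrao's, with the reduction to the cyclic case via a central order-$p$ subgroup being the standard dévissage.
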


Here is a consequence of the Deuring-Shafarevich formula.
Suppose $H$ is a $p$-group and $\phi_H$ is a cover of the projective line, 
branched just at one point, say $\infty$.
Since the \'etale fundamental group of the affine line is trivial, 
there is a unique point in $\phi_H^{-1}(\infty)$, and $\phi_H$ is totally ramified at that point.  
Then Theorem \ref{TDS} implies that $X$ has $p$-rank $0$.

 \mbox{}\\ \subsubsection{The Newton polygon of curves in the context of wild ramification} \mbox{}\\

The cover $\phi_H:X \to Z$ is wildly ramified 
if $p$ divides the order of the inertia group at any of the ramification points of $\phi_H$.
Wild ramification places restrictions on the Newton polygon.
For example, when $p=2$, there does not exist a hyperelliptic supersingular curve of genus $3$ by \cite[5.6.2]{O:hypsup}.
This can be generalized as follows.

\begin{theorem}
\begin{enumerate}
\item \cite[Theorem 1.2]{scholtenzhu} 
If $p=2$ and $n \geq 2$, then there does not exist a supersingular curve of genus $2^n-1$ 
which is hyperelliptic.

\item \cite[Theorem 2]{blacheprevaluations} If $p$ is prime and $n \in \NN$ with $n(p-1) > 2$ and $1 \leq i \leq p-1$, 
then there does not exist a supersingular curve 
of genus $(ip^n - i -1)(p-1)/2$ which is a $p$-cyclic cover of the projective line.
\end{enumerate}
\end{theorem}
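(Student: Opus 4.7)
For part (1) on hyperelliptic curves in characteristic $2$: my plan is to use the explicit Artin--Schreier presentation $y^2+y=f(x)$ and control the Newton polygon through $2$-adic estimates on exponential sums. Starting from the identity
\[
\#X(\FF_{2^k})=2^k+1+\sum_{x\in\FF_{2^k}}(-1)^{\mathrm{Tr}(f(x))},
\]
the reciprocal roots of $L(X/\FF_{2^k},T)$ become exponential sums attached to $f$. Supersingularity demands that every slope equal $1/2$, equivalently that the $2$-adic valuations of the coefficients of $L(X,T)$ lie on the line $y=x/2$. I would then apply Stickelberger / Adolphson--Sperber style estimates on these valuations, tailored to the pole profile forced on $f$ by the condition $g=2^n-1$ via Riemann--Hurwitz, to exhibit a coefficient whose valuation is strictly smaller than the diagonal value. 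This produces a slope strictly less than $1/2$ and, by the symmetry of the Newton polygon, rules out supersingularity.

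For part (2): the strategy is parallel but cleaner, because a $p$-cyclic cover of $\PP$ is a single Artin--Schreier cover $y^p-y=f(x)$ with $f$ a polynomial (after absorbing the Artin--Schreier equivalence). Riemann--Hurwitz together with $p\nmid \deg f$ forces $\deg f=i(p^n-1)$ under the hypothesis, so that the genus formula $(d-1)(p-1)/2$ delivers exactly $(ip^n-i-1)(p-1)/2$. I would then invoke Blache's refined formula for the first slope of such Artin--Schreier covers, expressed as an explicit combinatorial quantity in the $p$-adic digits of $\deg f$, and verify that the hypotheses $n(p-1)>2$ and $1\le i\le p-1$ are exactly what is needed to force the first slope strictly below $1/2$, contradicting supersingularity.

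The hard part in both cases is converting the numerically special form of $g$ into a structural property of $f$ that genuinely controls the Newton polygon. The crude Stickelberger bound is far too weak --- it only says the first slope is non-negative --- so one needs the sharper Adolphson--Sperber / Blache estimates that produce a breakpoint of the Hodge polygon strictly below the diagonal. Exhibiting a coefficient of $L(X,T)$ whose $p$-adic valuation matches the Hodge lower bound at that breakpoint, so that the Newton polygon actually touches the Hodge polygon there, is the technically delicate input provided by Scholten--Zhu in part (1) and Blache in part (2). Once this is achieved, the non-existence conclusion is immediate: supersingularity requires the constant-slope $1/2$ polygon, which cannot coexist with a vertex lying strictly below the diagonal.
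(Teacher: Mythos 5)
First, a point of comparison: the paper does not prove this statement at all --- it is quoted from Scholten--Zhu and Blache --- so there is no internal argument to measure your sketch against; the question is whether it stands on its own, and it does not. The decisive technical content, namely a bound valid for \emph{every} curve of the given genus forcing some Newton polygon slope strictly below $1/2$, is exactly the step you defer to ``the technically delicate input provided by Scholten--Zhu in part (1) and Blache in part (2)''. Since those are the very results to be proved, the proposal is a roadmap of the known strategy rather than a proof.

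Beyond that, two concrete steps are wrong or missing. (i) The reduction to a polynomial $f$ is misattributed: Riemann--Hurwitz alone does not force a pole profile, and a $p$-cyclic cover of the projective line is in general $y^p-y=f(x)$ with $f$ a rational function branched at several points; no ``Artin--Schreier equivalence'' turns such an $f$ into a polynomial. The correct reduction is: supersingular $\Rightarrow$ $p$-rank $0$ (Proposition \ref{Pprankssss}), and then the Deuring--Shafarevich formula (Theorem \ref{TDS}) forces branching at exactly one point, after which $f$ may be taken to be a polynomial with $p \nmid \deg f$ and the genus determines $\deg f$ (e.g.\ $\deg f = 2^{n+1}-1$ in part (1), $\deg f = i(p^n-1)$ in part (2)). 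Without this step your part (2) argument simply does not cover covers branched at more than one point. (ii) The direction of your estimates is backwards. Stickelberger/Adolphson--Sperber-type results give divisibilities, i.e.\ lower bounds on the valuations $v_i$, so they can only push the Newton polygon up toward the supersingular line; to exclude supersingularity you must prove that some $v_i$ is strictly less than $i/2$ (equivalently that the first slope is $<1/2$), which is a non-vanishing statement modulo a power of $p$. Your fallback --- that the Newton polygon touches the Hodge polygon at a vertex below the diagonal --- is a generic statement and cannot be assumed for every curve of genus $2^n-1$, resp.\ $(ip^n-i-1)(p-1)/2$; indeed for nearby degrees (e.g.\ $\deg f = 2^{n+1}+1$) the Newton polygon does climb all the way to the supersingular line. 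Producing the uniform first-slope upper bound, via exact valuations of Gauss sums or explicit Frobenius estimates, is precisely the content of the cited theorems, not a routine verification, so the gap is essential.
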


For more information about the connection between the Newton polygons of Artin-Schreier curves and exponential sums, 
see \cite{blache1, blache2, blacheprefirstvertices, farnellpries, 
scholtenzhu02C, scholtenzhu02B, scholtenzhu03}.

\mbox{}\\ \subsubsection{The Ekedahl-Oort type of curves in the context of wild ramification}   \mbox{}\\

Wild ramification also places restrictions on the Ekedahl-Oort type.
When $p=2$, there is a complete classification in \cite{EP} of the Ekedahl-Oort types which occur for Jacobians of hyperelliptic curves.  
Briefly, each hyperelliptic curve in characteristic $2$ has an affine equation of the form $y^2+y=h(x)$ for a rational function $h(x)$.
Write ${\rm div}_\infty(h(x))=\sum_{i=0}^{f} c_i P_i$.  
Then the multi-set $\{c_i\}$ determines the Ekedahl-Oort type \cite[Theorems~1.2-1.3]{EP}.
Thus, when $p=2$, the number of Ekedahl-Oort types that occur for Jacobians of hyperelliptic curves of genus $g$ 
is approximately the number of partitions of $g+1$.


 \mbox{}\\ \section{Notation and background} \label{Snotation} \mbox{}\\

This section contains a very concise summary of the notation and background used in this paper.
The reader who would like to learn this topic thoroughly should consult a more complete description, 
for example as found in \cite{LO}, \cite{O:strat}, or the chapter 
{\it Moduli of Abelian Varieties} by Chai and Oort in this volume.

Let $k$ be an algebraically closed field of characteristic $p>0$.
Let $A$ be a principally polarized abelian variety of dimension $g$ defined over $k$.

 \mbox{}\\ \subsection{The $p$-torsion group scheme} \mbox{}\\

The multiplication-by-$p$ morphism $[p]:A \to A$ is a finite flat morphism of degree $p^{2g}$.
It is the composition of $\frob$ and $\ver$, where $\frob:A \to A^{(p)}$ denotes the relative Frobenius morphism
and $\ver: A^{(p)} \to A$ is the Verschiebung morphism.
The morphism $\frob$ comes from the $p$-power map on the structure sheaf; it is purely inseparable of degree $p^g$.  
Also $V$ is the dual of $\frob_{A^{\rm dual}}$.  

The {\it $p$-torsion group scheme} of $A$ is 
\[A[p]= \ker[p].
\]
In fact, $A[p]$ is
a symmetric $\bt_1$ group scheme as defined in \cite[2.1, Definition 9.2]{O:strat}.  It has rank $p^{2g}$.  It is killed by $[p]$, with
$\ker(\frob) = \im(\ver)$ and $\ker(\ver) = \im(\frob)$.

The principal polarization on $A$ induces a principal quasipolarization (pqp) on $A[p]$,
i.e., an anti-symmetric isomorphism $\psi:A[p] \to A[p]^D$, where $D$ denotes the Cartier dual.  
(This definition must be modified slightly if $p=2$.)
Thus, $A[p]$ is a symmetric $\bt_1$ group scheme together with a principal quasipolarization.

Isomorphism classes of pqp $\bt_1$ group schemes over $k$
have been completely classified in terms of Ekedahl-Oort types \cite[Theorem 9.4
\& 12.3]{O:strat}, see Section \ref{Seotype}.  This builds on
work of Kraft \cite{Kraft} (unpublished, which did not include polarizations) and of Moonen
\cite{M:group} (for $p \geq 3$). 
(When $p=2$, there are complications with the polarization which are resolved in \cite[9.2, 9.5, 12.2]{O:strat}.)

 \mbox{}\\ \subsection{The $p$-rank and $a$-number} \label{Sprankanumber} \label{Sanumber} \mbox{}\\

The {\it $p$-rank} of $A$ is 
\[f={\rm dim}_{\FF_p} {\rm Hom}(\mu_p, A),\]
where $\mu_p$ is the kernel of Frobenius on $\GG_m$.
The advantage of this definition is that it is also valid for semi-abelian varieties.
When $A$ is an abelian variety, then $p^f$ is the cardinality of $A[p](k)$; 
the reason is that the multiplicity of the group schemes $\ZZ/p$ and $\mu_p$ in $A[p]$ is the same 
because of the symmetry induced by the polarization. 

The {\it $a$-number} of $A$ is 
\[a={\rm dim}_k {\rm Hom}(\alpha_p, A),\]
where $\alpha_p$ is the kernel of Frobenius on $\GG_a$.
It is known that $0 \leq f \leq g$ and $1 \leq a +f \leq g$.

Since $\mu_p$ and $\alpha_p$ are both simple group schemes,
the $p$-rank and $a$-number are additive;
\begin{equation}
\label{eqfadditive}
f(A_1\oplus A_2) = f(A_1)+f(A_2)\text{ and }a(A_1\oplus A_2) = a(A_1)+a(A_2).
\end{equation}

The $p$-rank and $a$-number can also be defined for a $p$-torsion group scheme, 
$p$-divisible group, or Dieudonn\'e module.

 \mbox{}\\ \subsection{The $p$-divisible group and Newton polgon} \mbox{}\\

For each $n \in \NN$, consider the multiplication-by-$p^n$ morphism $[p^n]:A \to A$ and its kernel $A[p^n]$.
The $p$-divisible group of $A$ is $A[p^\infty] = \varinjlim A[p^n]$.

For each pair $(c,d)$ of non-negative relatively prime integers, fix
a $p$-divisible group $G_{c,d}$ of codimension $c$, dimension $d$, and thus height $c+d$.
By the Dieudonn\'e-Manin classification \cite{maninthesis}, there is an isogeny of $p$-divisible groups 
\[A[p^\infty] \sim \oplus_{\lambda=\frac{d}{c+d}} G_{c,d}^{m_\lambda},\] where $(c,d)$ ranges over pairs of 
non-negative relatively prime integers.

The Newton polygon is the multi-set of values of $\lambda$.
The line segments with slopes $\lambda$ form a lower convex hull from $(0,0)$ to $(2g,g)$.
The Newton polygon is an isogeny invariant of $A$; it is determined by the multiplicities $m_\lambda$.

The abelian variety $A$ is {\it supersingular} if and only if $\lambda=1/2$ is the only slope of 
its $p$-divisible group $A[p^\infty]$.
Let $\sigma_g$ denote the supersingular Newton polygon of height $2g$.
If $G_{1,1}$ denotes the $p$-divisible group of dimension $1$ and height $2$,
then $A$ is supersingular if and only $A[p^\infty] \sim G_{1,1}^g$.

There is a partial ordering on Newton polygons: 
one Newton polygon is smaller than a second if the lower convex hull of the first is never below the second.
The Newton polygon $\sigma_g$ is the smallest in this partial ordering.

 \mbox{}\\ \subsection{Dieudonn\'e modules}
\label{subsecdefcartier} \mbox{}\\

The $p$-divisible group $A[p^\infty]$ and the $p$-torsion group scheme $A[p]$ can be described using covariant Dieudonn\'e theory, see e.g., \cite[15.3]{O:strat}.  
Differences between the covariant and contravariant theory 
do not cause a problem in this paper since all objects we consider are 
principally quasipolarized and thus symmetric.

Briefly, 
let $\sigma$ denote the Frobenius automorphism of $k$ and its lift to the Witt vectors $W(k)$;
let $\til \EE = \til\EE(k) = W(k)[F,V]$ denote the non-commutative ring generated by semilinear operators $F$ and $V$ with relations
\begin{equation} \label{Efv}
FV=VF=p, \ F \tau = \tau^\sigma F, \ \tau V=V \tau^\sigma,
\end{equation} for all $\tau \in W(k)$.  

There is an equivalence of categories $\dieu_*$
between $p$-divisible groups over $k$ and $\til\EE$-modules which are free of finite rank over $W(k)$.
For example, the Dieudonn\'e module $D_\lambda :=  \dieu_*(G_{c,d})$ is a free $W(k)$-module of rank $c+d$.
Over $\operatorname{Frac}W(k)$, there is a basis $x_1, \ldots, x_{c+d}$ for $D_\lambda$ such that $F^{d}x_i=p^c x_i$.

Similarly, let $\EE = \til \EE \otimes_{W(k)} k$ be the reduction of the Cartier ring modulo $p$; it is a non-commutative ring $k[F,V]$ subject to the same constraints as \eqref{Efv}, except that $FV = VF = 0$ in $\EE$.  Again, there is an equivalence of categories $\dieu_*$ between finite commutative group schemes $I$ (of rank $2g$) annihilated by $p$ and $\EE$-modules of finite dimension ($2g$) over $k$.

If $M = \dieu_*(I)$ is the Dieudonn\'e module over $k$ of $I$, then  
a principal quasipolarization  $\psi:I \to I^D$ induces a 
a nondegenerate symplectic form $\ang{\cdot,\cdot}:M \times M \to k$
on the underlying $k$-vector space of $M$, subject to the additional constraint that, for all $x$ and $y$ in $M$,
\[\ang{Fx,y} = \ang{x,Vy}^\sigma.\]

If $A$ is the Jacobian of a curve $X$, then there is an isomorphism of $\EE$-modules between the {\it contravariant} 
Dieudonn\'e module over $k$ of ${\rm Jac}(X)[p]$ 
and the de Rham cohomology group $H^1_{\rm dR}(X)$ by \cite[Section 5]{Oda}.  
The canonical principal polarization on $\operatorname{Jac}(X)$ induces a canonical isomorphism $\dieu_*(\operatorname{Jac}(X)[p]) \simeq H^1_{\rm dR}(X)$; we will use this identification without further comment.

For elements $A_1, \ldots, A_r \in \EE$, 
let $\EE(A_1, \ldots, A_r)$ denote the left ideal $\sum_{i=1}^r \EE A_i$ of $\EE$ generated by $\{A_i \mid 1 \leq i \leq r\}$.

\begin{example}\label{exi11} {\em The group scheme $I_{1,1}$.}
There is a unique symmetric ${\rm BT}_1$ group scheme
of rank $p^2$ and $p$-rank $0$, which we denote $I_{1,1}$.
It is a non-split extension of $\alpha_p$ by $\alpha_p$ as in \eqref{EI11}.
The mod $p$ Dieudonn\'e module of $I_{1,1}$ is $D_{1,1} := \dieu_*(I_{1,1})$.  
Then $D_{1,1} \simeq \EE/\EE(F+V)$.
\end{example}

 \mbox{}\\ \subsection{The Ekedahl-Oort type} \label{Seotype} \mbox{}\\

As in \cite[Sections 5 \& 9]{O:strat}, the isomorphism type of a symmetric ${\rm BT}_1$ group scheme 
$I$ over $k$ can be encapsulated into combinatorial data.
If $I$ is symmetric with rank $p^{2g}$, then there is a {\it final filtration} $N_1 \subset N_2 \subset \cdots \subset N_{2g}$ 
of ${\mathbb D}_*(I)$ as a $k$-vector space which is stable under the action of $V$ and $F^{-1}$ such that $i={\rm dim}(N_i)$ \cite[5.4]{O:strat}.

The {\it Ekedahl-Oort type} of $I$ is 
\[\nu=[\nu_1, \ldots, \nu_g], \ {\rm where} \ {\nu_i}={\rm dim}(V(N_i)).\]
The $p$-rank is ${\rm max}\{i \mid \nu_i=i\}$ and the $a$-number equals $g-\nu_g$.
There is a restriction $\nu_i \leq \nu_{i+1} \leq \nu_i +1$ on the Ekedahl-Oort type.
There are $2^g$ Ekedahl-Oort types of length $g$ since all sequences satisfying this restriction occur.   
By \cite[9.4, 12.3]{O:strat}, there are bijections between (i) Ekedahl-Oort types of length $g$; (ii) 
pqp ${\rm BT}_1$ group schemes over $k$ of rank $p^{2g}$;
and (iii) pqp Dieudonn\'e modules of dimension $2g$ over $k$.

By \cite{EVdG}, the Ekedahl-Oort type can also be described by its Young type $\mu$.  
Given $\nu$, for $1 \leq  j \leq g$, consider the strictly decreasing sequence
\[\mu_j = \#\{i \mid 1 \leq i \leq  g, \  i - \nu_i \geq j\}.\] 
There is a Young diagram with $\mu_j$ squares in the $j$th row. The {\it Young type}
is $\mu = \{\mu_1, \mu_2, . . .\}$, where one eliminates all $\mu_j$ which are $0$. 
The $p$-rank is $g - \mu_1$ and the $a$-number is $a = {\rm max}\{j \mid \mu_j \not = 0\}$.

The Ekedahl-Oort type places restrictions on the Newton polygon and vice-versa, see \cite{harashita07, harashita10}.
  
\begin{example} \label{Eir1}
Let $r \in \NN$.  
There is a unique symmetric $\bt_1$ group scheme of rank $p^{2r}$
with $p$-rank $0$ and $a$-number $1$, which we denote $I_{r,1}$. 
The Dieudonn\'e module of $I_{r,1}$ has the property that $\dieu_*(I_{r,1}) \simeq \EE/\EE(F^r + V^r)$.
For $I_{r,1}$, the Ekedahl-Oort type is $[0,1,2,\ldots, r-1]$ and the Young type is $\{r\}$.
\end{example}

\mbox{}\\ \section{Stratifications of moduli spaces} \label{Sstratmoduli} \mbox{}\\

Let $\CA_g=\CA_g \otimes \FF_p$ be the 
moduli space of principally polarized abelian varieties of dimension $g$ in characteristic $p$
and let $\tilde{\CA}_g$ denote its toroidal compactification.
Let ${\mathcal M}_g= {\mathcal M}_g \otimes \FF_p$ 
denote the moduli space of smooth curves of genus $g$ in characteristic $p$
and let $\overline{\mathcal M}_g$ denote the Deligne-Mumford compactification.

There are stratifications of $\CA_g$ by $p$-rank, Newton polygon, and Ekedahl-Oort type.
We do not discuss the major results about these stratifications on $\CA_g$ in this paper, but merely
focus on results about the dimension of the strata and provide important references in Section \ref{Sdimstrata}.
In Section \ref{Sisogeny}, we include some information about central and isogeny leaves which is not used
later in the paper.

Consider the Torelli morphism $T: {\overline \CM}_g \to \tilde{\CA_g}$.
The open Torelli locus is the image of $\CM_g$ under $T$.
Via the Torelli morphism, the moduli space $\CM_g$ also has stratifications by 
these invariants. This leads to a geometric generalization of Question \ref{Qmot2}.

\begin{question} \label{Qmot3}
If $p$ is prime and $g \geq 2$, does the open Torelli locus intersect the strata of $\CA_g$ by 
$p$-rank, Newton polygon, or Ekedahl-Oort type? If so, what are the geometric properties of the intersection?
\end{question}

Unfortunately, some of the powerful techniques used to study the stratifications on 
$\CA_g$ are not available on $\CM_g$, 
such as deformation tools 
(Serre-Tate theory and Dieudonn\'e theory) and Hecke operators.
The Torelli morphism is a good tool, but it is not
flat since the fibers have positive dimension over the boundary $\partial \CM_g$, whose points
represent singular curves of genus $g$.  

In Section \ref{Sunlikely}, we discuss expectations about Question \ref{Qmot3} from the perspective of unlikely intersections. 
Finally, in Sections \ref{SFVdG}-\ref{SFVdGproof}, we sketch a proof of Faber and Van der Geer's theorem
about the dimensions of the $p$-rank strata on $\CM_g$.

\mbox{}\\ \subsection{Dimensions of the strata} \label{Sdimstrata} \mbox{}\\

Let $g \geq 1$.  The dimension of $\CA_g$ is $g(g + 1)/2$.  Here is some information 
about the dimensions of the strata plus a partial list of some valuable references. 

\smallskip

{\bf (A) The $p$-rank strata:}

For $0 \leq f \leq g$, let $\CA_g^f$ denote the $p$-rank $f$ stratum
whose points represent curves of genus $g$ and $p$-rank $f$.
By \cite{normanoort}, $\CA_g^f$ is non-empty and pure of codimension $g-f$.

\smallskip

Some valuable references: 

Oort, {\it Subvarieties of moduli spaces} \cite{O:sub}

Norman and Oort, {\it Moduli of abelian varieties} \cite{normanoort}

\smallskip

{\bf (B) Newton polygon strata:}

Let $\xi$ be a symmetric Newton polygon of height $2g$.
Consider the stratum $\CA_g[\xi]$ of $\CA_{g}$ 
whose points represent principally polarized abelian varieties with Newton polygon $\xi$.
As in \cite[3.3]{OortNPformalgroups} or \cite[1.9]{OortNPstrata}, define 
\[{\rm sdim}(\xi)=\# \Delta(\xi), \]
where
\[\Delta(\xi) = \{(x,y) \in \ZZ \times \ZZ \mid y < x \leq g, \ (x,y) {\rm \ on \ or \ above\ } \xi\}.\]

By \cite[Theorem 4.1]{OortNPstrata}, the dimension of $\CA_g[\xi]$ is 
\[{\rm dim}(\CA_g[\xi]) = {\rm sdim}(\xi).\]

By \cite{chaioort}, $\CA_g[\xi]$ is irreducible if $\xi$ is not the supersingular Newton polygon $\sigma_g$.
This implies that $\CA_g^f$ is irreducible, except when $g=1,2$ and $f=0$.

\smallskip

Some valuable references:

Katz, {\it Slope filtration of $F$-crystals}, \cite{katzslope}

de Jong and Oort, {\it Purity of stratification by Newton polygons} \cite{oortpurity}

Chai and Oort, {\it Monodromy and irreducibility of leaves} \cite{chaioort}

\smallskip

{\bf (C) Ekedahl-Oort strata:}

Let $\xi$ be a symmetric $\bt_1$ group scheme with Ekedahl-Oort type $\nu=[\nu_1, \ldots, \nu_g]$.
By \cite[Theorem 1.2]{O:strat}, the stratum of $\CA_g$ whose points represent abelian varieties with Ekedahl-Oort type $\nu$
is locally closed and quasi-affine with dimension $\sum_{i=1}^g \nu_i$.

\smallskip

Some valuable references:

Kraft, {\it Kommutative algebraische $p$-Gruppen} \cite{Kraft}

Oort, {\it A stratification of a moduli space of abelian varieties} \cite{O:strat}

Moonen and Wedhorn, {\it Discrete invariants of varieties in positive characteristic} \cite{moonenwedhorn}

Ekedahl and Van der Geer, {\it Cycle classes of the E-O stratification on the moduli of abelian varieties} \cite{EVdG}

\smallskip

\mbox{}\\ \subsection{Some information about central leaves and isogeny leaves} \label{Sisogeny} \mbox{}\\

As before, let $\xi$ be a symmetric Newton polygon of height $2g$. 
The geometric structure of the Newton polygon stratum $\CA_g[\xi]$ can be studied using central and isogeny leaves.

At a point $x$ representing a principally polarized abelian variety $A$ with Newton polygon $\xi$, 
one can define the central leaf $C(x)$ and isogeny leaf $I(x)$ of $\CA_g[\xi]$ at $x$.
By \cite[Theorem 5.3]{oortjams}, $\CA_g[\xi]$ has an `almost' product structure
by $C(x)$ and $I(x)$.
Informally, $C(x)$ is the subset of $\CA_g[\xi]$ whose points represent abelian varieties $A'$ such that 
there is an isomorphism of $p$-divisible groups $A'[p^\infty] \simeq A[p^\infty]$. 
To define $I(x)$, first define $H_\alpha(x)$ to be the set of points of $\CA_g$ connected to $x$ by iterated 
$\alpha_p$-isogenies (over extension fields).
Then $I(x)$ is the union of all irreducible components of $H_\alpha(x)$ that contain $x$.

The dimensions of the
central leaf and the isogeny leaf at $x$ depend only on the slopes of $\xi$.
Let $c(x) = {\rm dim}(C(x))$ and $i(x) = {\rm dim}(I(x))$.
By \cite[Corollary 5.8]{oortjams}, the `almost' product structure implies
that
\[c(x) + i(x)={\rm sdim}(\xi).\]

The formula for $c(x)$ is more complicated than that of ${\rm sdim}(\xi)$, but depends only on $\xi$.  It relies on counting lattice points $(x,y)$
in a region determined by the slopes $\lambda$ and $1-\lambda$ of $\xi$. 
We include the formula in a special case.
Suppose $\xi$ has slopes $m/(m+n)$ and $n/(m+n)$ where ${\rm gcd}(m,n)=1$ and $m > n$.
Then \[c(\xi) = \#\{(x,y) \in \ZZ \times \ZZ \mid 0 \leq x \leq g, \frac{n}{m+n}x \leq y < \frac{m}{m+n} x\}.\]

\begin{example} \label{Ecidim}
when $g=4$ and $\xi$ has slopes $1/4, 3/4$, then 
${\rm sdim}(\xi)= 6$, $c(\xi)=5$ and $i(\xi)=1$;
when $g=5$ and $\xi$ has slopes $2/5, 3/5$, then 
${\rm sdim}(\xi) = 7$, $c(\xi) = 3$, and $i(\xi)=4$.
\end{example}

\mbox{}\\ \subsection{Unlikely intersections} \label{Sunlikely} \mbox{}\\

In \cite[Expectation 8.5.4]{oortpadova05}, Oort made the following observation.
The moduli space $\CA_g$ has dimension $g(g + 1)/2$
and its supersingular locus $\CA_g[\sigma_g]$ has dimension $\lfloor g^2/4 \rfloor$.
The difference $\delta_g:=g(g + 1)/2 - \lfloor g^2/4 \rfloor$ is the length of a chain 
which connects the ordinary Newton polygon $\nu_g$ to the 
supersingular Newton polygon $\sigma_g$ in the partial ordered set of Newton polygons.

\begin{remark} \label{Rlongchain}
If $g \geq 9$, then $\delta_g > 3g - 3 ={\rm dim}(\CM_g)$.
\end{remark}

Because of Remark \ref{Rlongchain}, at least one of the following is true:
\begin{enumerate}
\item Either $\CM_g$ does not admit a perfect stratification by Newton polyon: this means that 
there are two Newton polygons $\xi_1$ and $\xi_2$ such that $\CA_g[\xi_1]$ is in the closure of $\CA_g[\xi_2]$,
but $\CM_g[\xi_1]$ is not in the closure of $\CM_g[\xi_2]$;
\item or some Newton polygons do not occur for Jacobians of smooth curves.
\end{enumerate}

At this time, no Newton polygon has been excluded from occurring for a Jacobian in any characteristic.

One expects, see \cite[Conjecture 8.5.7]{oortpadova05}, that decomposable $p$-divisible groups 
will be more tractable to study.  For this reason, one expects that Newton polygons whose slopes have small 
denominators will be easier to realize for Jacobians of smooth curves.

Among the indecomposable ones, one test case
is when $g=11$ with $p$-divisible group isogenous to $G_{5,6} \oplus G_{6,5}$ (slopes $5/11, 6/11)$
\cite[Expectation 8.5.3]{oortpadova05}.
This Newton polygon stratum has high codimension in ${\mathcal A}_{11}$, so one expects
that it would not intersect the open Torelli locus.  
However, when $p=2$, Blache remarked that the Newton polygon of the curve 
$y^2+y = x^{23}+x^{21}+x^{17}+x^7+x^5$ 
has slopes $5/11, \ 6/11$; another example of rare slopes occurs 
for the curve $y^2+y=x^{25}+x^9$ in characteristic $2$ whose Newton polygon has slopes $5/12, \ 7/12$.

\mbox{}\\ \subsection{Results about the $p$-rank stratification} \label{SFVdG} \mbox{}\\

In this section, we explain how the $p$-rank strata do have the expected dimension in the moduli space $\CM_g$
of curves of genus $g$.
Fix a prime $p$ and integers $g \geq 2$ and $f$ such that $0 \leq f \leq g$. 

The moduli space $\CM_g$ can be stratified by $p$-rank into strata $\CM_g^f$
whose points represent curves of genus $g$ and $p$-rank $f$.
Similarly, one can stratify the moduli space $\CH_g$ of hyperelliptic curves
or the compactifications $\overline{\CM}_g$ and $\overline{\CH}_g$ by $p$-rank.

Recall that $\CA_g^f$ is irreducible unless $g=1,2$ and $f=0$.
In most cases, it is not known whether $\CM_g^f$ and $\CH_g^f$ are irreducible. 

\begin{theorem} \label{TFVdG} \cite[Theorem 2.3]{fabervandergeer}
Let $g \geq 2$.  Every component of $\overline{\CM}_g^f$ has dimension $2g-3+f$ 
(codimension $g-f$ in $\overline{\CM}_g$);
in particular, there exists a smooth curve over $\overline{\FF}_p$ with genus $g$ and $p$-rank $f$. 
\end{theorem}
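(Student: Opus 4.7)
The plan is to prove Theorem \ref{TFVdG} by induction on $g$, exploiting the interaction between the $p$-rank and degeneration to stable curves. The key inputs are: (i) for an irreducible nodal curve $C$ with normalization $\tilde C$ of genus $g-1$, the generalized Jacobian $\operatorname{Pic}^0(C)$ is an extension of $\operatorname{Jac}(\tilde C)$ by $\GG_m$, so $f(C) = f(\tilde C) + 1$; (ii) for a reducible stable curve $C_1 \cup_{\text{pt}} C_2$, the generalized Jacobian factors as a product, so $f(C) = f(C_1) + f(C_2)$.

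For the lower bound $\dim S \geq 2g-3+f$ on every component $S$, I would invoke Norman-Oort: $\CA_g^f$ is pure of codimension $g-f$ in $\CA_g$. Since the Torelli morphism $T : \CM_g \hookrightarrow \CA_g$ is quasi-finite and injective on geometric points for $g \geq 2$, every component of $\CM_g^f = T^{-1}(\CA_g^f)$ has codimension at most $g-f$ in $\CM_g$. The bound extends to $\overline{\CM}_g^f$ via the extended Torelli morphism $\overline{\CM}_g \to \tilde\CA_g$ together with upper semicontinuity of $p$-rank in families of semi-abelian Jacobians.

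For the upper bound, the base case $g=2$ is direct since $\CM_2$ is open and dense in $\CA_2$. For the inductive step, assume the result for $g' < g$ and suppose for contradiction that a component $S$ of $\overline{\CM}_g^f$ has $\dim S > 2g-3+f$. Since $2g-3+f > g-2$ for $g \geq 2$, Diaz's theorem on complete subvarieties of $\CM_g$ forces $\overline S$ to meet some boundary divisor $\Delta_j$. If $j=0$, the formula $f(C) = f(\tilde C)+1$ identifies $\Delta_0 \cap \overline{\CM}_g^f$ with $\overline{\CM}_{g-1,2}^{f-1}$ via normalization plus two marked points, which by induction has pure dimension $2(g-1)-3+(f-1)+2 = 2g-4+f$. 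Since $\Delta_0$ is a divisor, either $\overline S \subset \Delta_0$, giving $\dim S \leq 2g-4+f < 2g-3+f$, or $\dim(\overline S \cap \Delta_0) \geq \dim S - 1 > 2g-4+f$, contradicting the inductive bound. The cases $j \geq 1$ are handled identically via the additive formula, applying induction to both factors of genera $j$ and $g-j$. Non-emptiness then follows by reversing: for $f \geq 1$, take a smooth $p$-rank-$(f-1)$ curve of genus $g-1$ (exists by induction), glue two distinct points to produce a nodal curve of $p$-rank $f$ and arithmetic genus $g$, and smooth it within $\overline{\CM}_g^f$; the case $f = 0$ uses the reducible boundary $\Delta_j$ for $j \geq 1$ instead.

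The main obstacle is the smoothing step: one must argue that every component of $\Delta_0 \cap \overline{\CM}_g^f$ (or of $\Delta_j \cap \overline{\CM}_g^f$ in the $f=0$ case) lies in the closure of $\CM_g^f$, so that the dimension formula at the boundary really propagates to the interior, and in particular no component of $\overline{\CM}_g^f$ of positive expected dimension is entirely contained in $\partial \overline{\CM}_g$. This is a local deformation-theoretic question about the Hasse-Witt matrix in the versal deformation of a nodal $p$-rank-$f$ curve: one needs the rank condition cutting out the $p$-rank stratum to be transverse to the smoothing parameter at the node. This transversality is the crucial local input beyond the purely global induction, and it is what distinguishes the $p$-rank stratification from finer stratifications (Newton polygon, Ekedahl-Oort), for which the analogous transversality on $\overline{\CM}_g$ is not known in general.
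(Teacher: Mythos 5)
Your global architecture (induction on $g$, additivity of the $p$-rank at the boundary, a purity-type lower bound plus a boundary-induction upper bound) is the same as the paper's, but two of your key inputs are not secure, and they interact to leave the theorem unproved. First, the lower bound: from Norman--Oort purity of the $p$-rank strata of $\CA_g$ you cannot conclude that every component of $T^{-1}(\CA_g^f)$ has codimension at most $g-f$ merely because $T$ is quasi-finite and injective; codimension does not pull back along arbitrary morphisms. On the open locus this can be repaired using smoothness of $\CA_g$ and the intersection-dimension inequality, but the repair breaks exactly where you need it most: for components of $\overline{\CM}_g^f$ lying in the boundary, the extended Torelli morphism has positive-dimensional fibers (the paper notes this non-flatness explicitly), and ``semicontinuity of the $p$-rank'' gives no dimension information there. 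The paper avoids this by applying de Jong--Oort purity of the Newton polygon stratification directly to $\overline{\CM}_g$ (whose generic point is ordinary), which yields $\dim S \geq 2g-3+f$ for \emph{every} component $S$, including those inside $\partial \overline{\CM}_g$. Second, your appeal to Diaz's theorem is a characteristic-zero input; in characteristic $p$, bounds on complete subvarieties of $\CM_g$ are traditionally \emph{deduced from} the $p$-rank stratification theorem you are proving (that is the theme of Faber--Van der Geer's paper), so this step risks circularity, whereas the paper cites the specific Lemma 2.5 of Faber--Van der Geer that every component $S$ meets $\Delta_i$ for all $1 \leq i \leq g-1$.

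The decisive gap is the one you flag yourself: the smoothing step. In the actual argument no local transversality of the Hasse--Witt rank condition at the node is needed. Non-emptiness of $\overline{\CM}_g^f$ is free (chains of $f$ ordinary and $g-f$ supersingular elliptic curves); purity on $\overline{\CM}_g$ gives $\dim S \geq 2g-3+f$ for every component $S$, boundary or not; and the clutching/induction bound shows the $p$-rank-$f$ locus of each boundary divisor has dimension at most $2g-4+f$. Hence no component can be contained in the boundary, so the generic point of each component represents a smooth curve: existence follows from the dimension count alone, not from a deformation-theoretic computation at the node. Because your lower bound does not apply to components contained in the boundary, you cannot run this circumvention, and the ``transversality'' input you identify as the main obstacle is left unproven; as written, your proposal therefore establishes neither the dimension statement for boundary components nor the final existence claim.
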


\begin{theorem} ($p$ odd) \cite[Theorem 1]{glasspries}, see also \cite[Lemma 3.1]{AP:hyp}, 
($p=2$) \cite[Corollary 1.3]{prieszhu}
Every component of $\overline{\CH}_g^f$ has dimension $g-1+f$ 
(codimension $g-f$ in $\overline{\CH}_g$);
in particular, there exists a smooth hyperelliptic curve over $\overline{\FF}_p$ 
with genus $g$ and $p$-rank $f$.
\end{theorem}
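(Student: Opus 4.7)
I would proceed by induction on $g$, following Glass-Pries for $p$ odd and Pries-Zhu for $p=2$. The base case $g=1$ is immediate, since $\overline{\CH}_1=\overline{\CM}_1$ has dimension $1$, the ordinary locus is open and dense, and the supersingular locus is a nonempty finite set; the dimensions $1-1+0=0$ and $1-1+1=1$ match.

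Assume the result in all genera less than $g$. The lower bound on dimension is supplied by Oort's purity theorem for the $p$-rank stratification (Section \ref{Sdimstrata}), applied to the universal Jacobian over $\overline{\CH}_g$: every nonempty irreducible component of $\overline{\CH}_g^f$ has codimension at most $g-f$ in $\overline{\CH}_g$, and hence dimension at least $g-1+f$. Granted nonemptiness at each level, the matching upper bound follows by a downward induction on $f$ starting from $f=g$ (where $\overline{\CH}_g^g=\overline{\CH}_g$ since the generic hyperelliptic curve is ordinary): if $\overline{\CH}_g^{f+1}$ is equidimensional of dimension $g+f$ and $\overline{\CH}_g^f\subsetneq\overline{\CH}_g^{f+1}$, then each component of $\overline{\CH}_g^f$ has codimension $1$ in some component of $\overline{\CH}_g^{f+1}$ by purity, hence dimension exactly $g-1+f$.

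The core of the argument is thus to construct, for every $0\leq f\leq g$, a point of $\overline{\CH}_g^f$. For $p$ odd, my plan is to use the clutching morphism $\kappa\colon\overline{\CH}_{g_1}\times\overline{\CH}_{g_2}\to\partial\overline{\CH}_g$ that glues two hyperelliptic curves at chosen Weierstrass points to form an admissible hyperelliptic double cover of arithmetic genus $g=g_1+g_2$. Since the glued node is separating, the $p$-rank of $\kappa(Y_1,Y_2)$ is $f_1+f_2$ by the additivity \eqref{eqfadditive}. Choosing $f_1+f_2=f$ and invoking the inductive hypothesis on both factors produces, inside $\partial\overline{\CH}_g\cap\overline{\CH}_g^f$, a stratum of dimension $(g_1-1+f_1)+(g_2-1+f_2)=g-2+f$. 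This is strictly less than the purity lower bound $g-1+f$, and the boundary itself has codimension $1$ in $\overline{\CH}_g$, so the component of $\overline{\CH}_g^f$ through the boundary point must extend into the interior, yielding the desired smooth hyperelliptic curve.

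The main obstacle, signaled by the separate citation \cite[Corollary 1.3]{prieszhu} for $p=2$, is the characteristic $2$ case: a hyperelliptic double cover is then wildly Artin-Schreier ramified rather than tame, so the classical Weierstrass-clutching argument must be replaced. Here the Pries-Zhu strategy is to work with explicit Artin-Schreier equations $y^2+y=h(x)$ as described in Section \ref{SNPEO}, where the $p$-rank is read off the divisor of poles ${\rm div}_\infty(h)=\sum c_i P_i$; one then exhibits hyperelliptic curves of each $p$-rank $f$ by prescribing such pole data and computes the dimension of the resulting moduli family directly from the parameter space of admissible pairs $(h,\{P_i\})$, after which purity closes the argument as before.
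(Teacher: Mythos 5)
The paper does not prove this theorem (it only cites \cite{glasspries}, \cite{AP:hyp}, \cite{prieszhu}), but the parallel argument for $\overline{\CM}_g^f$ is sketched in Section \ref{SFVdGproof}, and comparing your proposal with that sketch exposes a genuine gap in your upper bound. Purity only ever gives \emph{lower} bounds on the dimensions of these strata. Your downward induction on $f$ asserts that each component $S$ of $\overline{\CH}_g^f$ lies in the closure of $\overline{\CH}_g^{f+1}$ and has codimension $1$ there; but that containment is not known a priori --- it is essentially the ``perfect stratification'' property discussed in Section \ref{Sunlikely}, and for $p$-rank strata of $\CM_g$ or $\CH_g$ it is a consequence of the theorem, not an input. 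Nothing rules out, at this stage, a component of the $p$-rank $\le f$ locus whose \emph{generic} $p$-rank is $f$ and whose dimension exceeds $g-1+f$; purity is silent about such a component, and your induction never reaches it. The same gap resurfaces in your existence step: to conclude that the component through your clutched boundary point ``extends into the interior'' you need an upper bound on $\dim(S\cap\partial\overline{\CH}_g)$, not merely the existence of one boundary stratum of dimension $g-2+f$ inside it.

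The correct mechanism, visible in the paper's sketch of Theorem \ref{TFVdG}, is an induction on $g$ (not on $f$) through the boundary: one shows that every component $S$ of $\overline{\CH}_g^f$ meets a boundary component $\Delta_i$ with $i\ge 1$ (the analogue of \cite[Lemma 2.5]{fabervandergeer}); then $\dim(S)\le\dim(S\cap\Delta_i)+1$ by \cite[page 614]{V:stack}; and $\dim(S\cap\Delta_i)$ is bounded above by $\max_{f_1+f_2=f}\bigl(\dim\overline{\CH}_{i}^{f_1}+\dim\overline{\CH}_{g-i}^{f_2}\bigr)=g-2+f$, using the restriction of the Weierstrass-point clutching morphism to $p$-rank strata together with the inductive hypothesis in lower genus. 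Combined with the purity lower bound this pins $\dim(S)=g-1+f$ and simultaneously shows $S\not\subseteq\Delta_i$, so the generic point of $S$ is a smooth hyperelliptic curve. Your clutching construction and your remarks on the wildly ramified case $p=2$ (where \cite{prieszhu} replaces Weierstrass clutching by explicit Artin--Schreier pole data) are in the right spirit; it is only the source of the upper bound that must be replaced.
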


In \cite{AP:mono} and \cite{AP:hyp}, 
the authors prove more about the components of $\overline{\CM}_g^f$ and $\overline{\CH}_g^f$;
this includes results about how the components intersect the boundary and  
results about the $\ell$-adic monodromy of the components.
In \cite{Pr:large}, for all $g \geq 3$ and all $p$, there are results
about the moduli of curves with $p$-rank $g-2$ or $g-3$ and $a$-number $a \geq 2$.

\mbox{}\\ \subsection{Sketch of the proof of Theorem \ref{TFVdG}} \label{SFVdGproof} \mbox{}\\

We give a sketch of the proof of Theorem \ref{TFVdG}; it uses the boundary of $\overline{\CM}_g$, whose points 
represent singular curves.

First, here is some more information about the $p$-rank of a singular curve.
Suppose $X$ has $2$ irreducible components $X_1$ and $X_2$, which are smooth curves 
which intersect in one ordinary double point.  
By \cite[Example 8, Page 246]{BLR}, ${\rm Jac}(X) \simeq {\rm Jac}(X_1) \oplus {\rm Jac}(X_1)$.
If $f_i$ is the $p$-rank of $X_i$, then the $p$-rank of $X$ is $f_1+f_2$.
It follows that the $p$-rank of a singular curve of compact type 
is the sum of the $p$-ranks of its components.

(More generally, the $p$-rank of a semi-abelian variety $A$ is 
$f={\rm dim}_{{\mathbb F}_p}{\rm Hom}(\mu_p, A)$.  The toric part of $A$ contributes to its $p$-rank.
This can be used to determine the $p$-rank of a curve of non-compact type, but we will not need that 
information in this paper.)

Thus, it is easy to construct a {\it singular} curve of genus $g$ with $p$-rank $f$, by constructing 
a chain of $f$ ordinary and $g-f$ supersingular elliptic curves, joined at ordinary double points.
This singular curve can be deformed to a smooth one, but it is not obvious that the $p$-rank stays constant 
in this deformation.
To prove that there is a {\it smooth} curve of genus $g$ with $p$-rank $f$, 
singular curves are still useful, but the argument must be done more carefully.

Let $\overline{{\mathcal M}}_{g;1}$ denote the moduli space whose points represent curves $X$ of genus $g$ 
together with a marked point $x$.  The dimension of $\overline{{\mathcal M}}_{g;1}$ is $3g-3+1$ for all $g \geq 1$. 

For each $1 \leq i \leq g-1$, there is a clutching morphism
\[\kappa_{i,g-i}: \overline{\CM}_{i;1} \times \overline{\CM}_{g-i;1} \to \overline{\CM}_g,\]
defined as follows:
if $\eta_1$ (resp.\ $\eta_2$) is a point representing a curve $X_1$ (resp.\ $X_2$) of genus 
$i$ (resp.\ $g-i$) with marked point $x_1$ (resp.\ $x_2$),
then $\kappa_{i,g-i}(\eta_1, \eta_2)$ represents the stable curve of genus $g$, with components 
$X_1$ and $X_2$, 
formed by identifying $x_1$ and $x_2$ in an ordinary double point. 
The image of $\kappa_{i,g-i}$ is the component $\Delta_i$ of the boundary of $\CM_g$.
The clutching morphism is a closed immersion if $i \not = g-i$ and is always a finite unramified map \cite[Corollary 3.9]{knudsen}.

\begin{proof} (Sketch of proof of Theorem \ref{TFVdG})
The proof is by induction on $g$.
When $g =2,3$, the result is true since the open Torelli locus is open and dense in $\CA_g$.
Suppose $g \geq 4$.

The dimension of $\overline{\CM}_g$ is $3g-3$.  There are singular curves that are ordinary, namely
chains of $g$ ordinary elliptic curves.  Since $\overline{\CM}_g$ is irreducible and 
the $p$-rank is lower semi-continuous, the generic geometric point of $\overline{\CM}_g$
is ordinary, with $p$-rank $g$.

Let $S$ be a component of $\overline{\CM}_g^f$.
The length of the chain which connects the ordinary Newton polygon $\nu_g$ to 
the largest Newton polygon having $(f,0)$ as a break point is $g-f$.
Using purity of the Newton polygon stratification \cite{oortpurity}, 
\[{\rm dim}(S) \geq (3g-3) -(g-f)=2g-3+f.\]

By \cite[Lemma 2.5]{fabervandergeer}, $S$ intersects $\Delta_i$ for each $1 \leq i \leq g-1$.
Since ${\rm codim}(\Delta_i, \overline{\CM}_g) = 1$, it follows from \cite[page 614]{V:stack} that 
${\rm dim}(S) \leq {\rm dim}(S \cap \Delta_i) +1$.

The $p$-rank of a singular curve of compact type is the sum of the $p$-ranks of its components, 
\cite[Example 8, Page 246]{BLR}.
As seen in \cite[Proposition 3.4]{AP:mono}, one can restrict the clutching morphism to the $p$-rank strata:
\[\kappa_{i,g-i}: \overline{\CM}^{f_1}_{i;1} \times \overline{\CM}^{f_2}_{g-i;1} \to \overline{\CM}^{f_1+f_2}_g.\]

This means that ${\rm dim}(S \cap \Delta_i)$ is bounded above by 
${\rm dim}(\overline{\CM}^{f_1}_{i;1}) + {\rm dim}(\overline{\CM}^{f_2}_{g-i;1})$, 
for some pair $(f_1,f_2)$ such that $f_1 + f_2 = f$.
Adding a marked point adds one to the dimension. 
By the inductive hypothesis (or an explicit computation when $i=1,g-1$), 
one checks that ${\rm dim}(\overline{\CM}^{f_1}_{i;1})=2i - 3 + f_1 +1$
and ${\rm dim}(\overline{\CM}^{f_2}_{g-i;1})=2(g-i) -3 +f_2+1$.
It follows that ${\rm dim}(S \cap \Delta_i) \leq 2g - 4 + f$. 
Thus ${\rm dim}(S) \leq 2g-3 +f$, which completes the proof.
\end{proof}

 \mbox{}\\ \section{An inductive result about Newton polygon and Ekedahl-Oort strata} \label{Sinductive} \mbox{}\\

This section contains the main result of the paper:
starting with a Newton polygon $\xi$ that can be realized for a smooth curve of genus $g$, 
the goal is to prove that any symmetric Newton polygon which is formed by adjoining slopes of $0$ and $1$ to $\xi$
can also be realized for a smooth curve (of larger genus and $p$-rank).
In the main result, I show this is possible under a geometric condition
on the stratum of ${\mathcal M}_g$ with Newton polygon $\xi$.

The importance of this result is that it allows us to restrict to the case of $p$-rank $0$ in
Question \ref{Qmot3}.  
This type of inductive process can be found in earlier work, 
e.g., \cite[Theorem 2.3]{fabervandergeer}, 
\cite[Section 3]{AP:mono}, \cite[Proposition 3.7]{Pr:large}, and \cite[Proposition 5.4]{APgen}.
However, Theorem \ref{Tinductive} is stronger than these results because 
it allows for more flexibility with the Newton polygon and Ekedahl-Oort type.

\mbox{}\\ \subsection{The main result} \label{Smainresult} \mbox{}\\

First, we fix some notation about Newton polygons and ${\rm BT}_1$ group schemes. 

\begin{notation} \label{Nstrata}
Let $\xi$ denote a symmetric Newton polygon (or a symmetric ${\rm BT}_1$ group scheme) occurring 
for principally polarized abelian varieties in dimension $g$. 
Let $\CA_g[\xi]$ be the stratum in $\CA_g$ whose geometric points represent principally polarized abelian varieties of 
dimension $g$ and type $\xi$.
Let $cd_\xi=  {\rm codim}({\mathcal A}_g[\xi], {\mathcal A}_g)$.
Let ${\mathcal M}_g[\xi]$ be the stratum in ${\mathcal M}_g$ whose geometric points represent smooth projective curves
of genus $g$ and type $\xi$.
\end{notation}

\begin{notation} \label{Nadde}
In the case that $\xi$ denotes a symmetric Newton polygon occurring in dimension $g$:
for $e \in \NN$, let $\xi^{+e}$ be the symmetric Newton polygon
in dimension $g+e$ such that
the difference between the multiplicity of the slope $\lambda$ in $\xi^{+e}$ 
and the multiplicity of the slope $\lambda$ in $\xi$ is $0$ if $\lambda \not \in \{0,1\}$ and is $e$ if 
$\lambda \in \{0,1\}$.  
\end{notation}

\begin{notation} \label{Nadde2}
In the case that $\xi$ denotes a symmetric ${\rm BT}_1$ group scheme occurring in dimension $g$:
for $e \in \NN$, let $\xi^{+e}$ be the symmetric ${\rm BT}_1$ group scheme
in dimension $g+e$ given by \[\xi^{+e}:= L^{e} \oplus \xi,\] where $L = \ZZ/p \oplus \mu_p$.
If $[\nu_1, \ldots, \nu_g]$ is the Ekedahl-Oort type of $\xi$, then 
$\xi^{+e}$ has Ekedahl-Oort type $[1,2, \ldots, e, \nu_1+e, \ldots, \nu_g +e]$. 
\end{notation}

\begin{theorem} \label{Tinductive}  
With notation as in \ref{Nstrata}, \ref{Nadde}, \ref{Nadde2},    
suppose that there exists an irreducible component $S=S_0$ of ${\mathcal M}_g[\xi]$ such that
${\rm codim}(S, {\mathcal M}_g)=cd_\xi$.
Then, for all $e \in \NN$, there exists a component $S_e$ of ${\mathcal M}_{g+e}[\xi^{+e}]$
such that ${\rm codim}(S_e, {\mathcal M}_{g+e})=cd_\xi$.
\end{theorem}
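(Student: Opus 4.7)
The plan is to argue by induction on $e$, so it suffices to treat the case $e = 1$ and to assume $g \geq 2$; the case $g = 1$ follows from the direct classification in Section \ref{Smallg}. Let $\hat{S}_0 \subset \overline{\CM}_{g;1}$ denote the preimage of $S_0$ under the forgetful morphism, an irreducible component of $\CM_{g;1}[\xi]$ of dimension $3g-2-cd_\xi$, and let $U \subset \overline{\CM}_{1;1}$ denote the one-dimensional open ordinary locus. The clutching morphism
\[
\kappa = \kappa_{g,1}\colon \overline{\CM}_{g;1} \times \overline{\CM}_{1;1} \to \overline{\CM}_{g+1}
\]
is a closed immersion by \cite[Corollary 3.9]{knudsen} (since $g \neq 1$) with image the boundary divisor $\Delta_1$. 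Set $T := \kappa(\hat{S}_0 \times U)$, an irreducible subvariety of $\Delta_1$ of dimension $3g-1-cd_\xi$. Because the Jacobian of a compact-type curve is the product of the Jacobians of its components \cite[Example 8, Page 246]{BLR}, every geometric point of $T$ represents a curve whose symmetric $\bt_1$ group scheme (respectively, Newton polygon) equals $\xi \oplus L = \xi^{+1}$, so $T \subset \overline{\CM}_{g+1}[\xi^{+1}] \cap \Delta_1$.

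Next, I invoke purity of the Newton polygon stratification \cite{oortpurity} (respectively, the analogous quasi-affineness statement for Ekedahl-Oort strata \cite[Theorem 1.2]{O:strat}) applied to the universal curve over $\overline{\CM}_{g+1}$: every irreducible component of $\overline{\CM}_{g+1}[\xi^{+1}]$ has codimension at most $cd_{\xi^{+1}}$ in $\overline{\CM}_{g+1}$. A direct calculation from the formulas of Section \ref{Sdimstrata} shows $cd_{\xi^{+1}} = cd_\xi$, because adjoining an $L$-summand contributes equally to $\dim \CA_{g+1}$ and to $\operatorname{sdim}(\xi^{+1})$ (respectively, $\sum_i \nu_i^{+1}$). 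Choose $x \in T$ corresponding to a pair $(X_0, E)$ with $X_0$ a smooth curve generic in $S_0$ and $E$ a generic ordinary elliptic curve, and let $S_1$ be any irreducible component of $\overline{\CM}_{g+1}[\xi^{+1}]$ containing $x$. Then $\dim S_1 \geq 3g - cd_\xi$, which is strictly larger than $\dim T$.

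The main obstacle is to verify that $S_1 \not\subset \Delta_1$. Near $x$, one decomposes $\overline{\CM}_{g+1}[\xi^{+1}] \cap \Delta_1$ via the closed immersion $\kappa$ as the union, over decompositions $\xi' \oplus \xi'' = \xi^{+1}$, of $\overline{\CM}_{g;1}[\xi'] \times \overline{\CM}_{1;1}[\xi'']$. The genus-$1$ factor $\xi''$ is either $L$ or $I_{1,1}$, and these loci are disjoint substacks of $\overline{\CM}_{1;1}$; since $E$ is ordinary, only the pair $(\xi', \xi'') = (\xi, L)$ is relevant in a neighborhood of $x$ (invoking the Krull--Schmidt property for symmetric $\bt_1$ group schemes to force $\xi' = \xi$ once $\xi'' = L$), giving precisely $T$ locally. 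Hence $\overline{\CM}_{g+1}[\xi^{+1}] \cap \Delta_1 \subseteq T$ in a Zariski-open neighborhood $V$ of $x$. If $S_1 \subseteq \Delta_1$, then $S_1 \cap V \subseteq T$, forcing $\dim S_1 \leq \dim T = 3g-1-cd_\xi < 3g-cd_\xi \leq \dim S_1$, a contradiction. Consequently $S_1 \not\subset \Delta_1$; since $\Delta_1$ is a Cartier divisor on $\overline{\CM}_{g+1}$ and $T \subseteq S_1 \cap \Delta_1$, the Cartier bound yields $\dim S_1 \leq \dim T + 1 = 3g - cd_\xi$. Combined with the purity lower bound, this gives $\dim S_1 = 3g - cd_\xi$, so $S_e := S_1 \cap \CM_{g+1}$ is an irreducible component of $\CM_{g+1}[\xi^{+1}]$ of codimension $cd_\xi$, as required.
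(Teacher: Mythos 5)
Your proposal is correct and takes essentially the same route as the paper's proof: clutch a generic curve of $S_0$ with an ordinary elliptic curve to get a locus $T$ (the paper's $Z$) of type $\xi^{+1}$ inside $\Delta_1$, use purity together with the computation $cd_{\xi^{+1}}=cd_\xi$ for the lower bound ${\rm dim}(S_1)\geq 3g-cd_\xi$, and use the divisor $\Delta_1$ for the upper bound ${\rm dim}(S_1)\leq {\rm dim}(T)+1$, forcing $S_1\not\subset\Delta_1$ and ${\rm codim}(S_1,\overline{\CM}_{g+1})=cd_\xi$. Your local analysis showing $\overline{\CM}_{g+1}[\xi^{+1}]\cap\Delta_1\subseteq T$ near $x$ actually supplies a justification the paper leaves implicit; just phrase the final Krull/Cartier bound via a component of $S_1\cap\Delta_1$ through $x$ (which lies in $T$ locally), rather than via the unproved containment $T\subseteq S_1$.
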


\begin{proof}
The proof is by induction on $e$, with the case $e=0$ being trivial.  By replacing $\xi$ by $\xi^{+(e-1)}$,
$S$ by $S_{e-1}$, and $g$ by $g+(e-1)$, it suffices to consider the case $e=1$.

As before, consider the clutching morphism 
\[\kappa_{1, g}:\overline{\mathcal M}_{1;1} \times \overline{\mathcal M}_{g;1} \to \overline{{\mathcal M}}_{g+1},\]
defined as follows:
if $\eta_1$ is a point representing an elliptic curve $E$ with origin $x_1$,
and $\eta_2$ is a point representing a curve $X_2$ of genus $g$ with marked point $x_2$, 
then $\kappa_{1,g}(\eta_1, \eta_2)$ represents the stable curve $X$ of genus $g+1$, 
with components $E$ and $X_2$, 
formed by identifying $x_1$ and $x_2$ in an ordinary double point.  By \cite[Example 8, Page 246]{BLR}, 
${\rm Jac}(X) \simeq {\rm Jac}(E) \oplus {\rm Jac}(X_2)$. The image of $\kappa_{1,g}$ 
is contained in the component $\Delta_1$ of the boundary of ${\mathcal M}_{g+1}$.

By hypothesis, ${\rm dim}(S) = {\rm dim}({\mathcal M}_g) - cd_\xi = 3g-3 - cd_\xi$.
Consider the forgetful morphism $\phi: {\mathcal M}_{g;1} \to {\mathcal M}_g$.
The fiber of $\phi$ over $\eta_2$ is isomorphic to $X$ and is thus irreducible.
Let $S' = \phi^*(S)$; it is irreducible and ${\rm dim}(S') = {\rm dim}(S) + 1$.
Let $Z= \kappa_{1,g-1}({\mathcal M}_{1;1} \times S')$ and note that ${\rm dim}(Z)={\rm dim}(S) + 2$.

The points of $Z$ represent singular curves of genus $g+1$ whose Newton polygon (resp.\ $p$-torsion group scheme) is
$\xi^{+1}$.
This shows that $\overline{{\mathcal M}}_{g+1}[\xi^{+1}]$ is non-empty.  
Let $S_1$ be a component of $\overline{{\mathcal M}}_{g+1}[\xi^{+1}]$ containing $Z$.
Since ${\rm codim}(\Delta_1, \overline{{\mathcal M}}_{g+1}) =1$, 
it follows from \cite[page 614]{V:stack} that ${\rm codim}(Z, S_1) \leq 1$.
So either (i) ${\rm dim}(S_1) = {\rm dim}(Z)$, in which case $Z$ is contained in the boundary   
of ${\mathcal M}_{g+1}$ or (ii) ${\rm dim}(S_1) = {\rm dim}(Z)+1$.

The next claim is that case (i) does not occur.
By definition, $cd_\xi=  {\rm codim}({\mathcal A}_g[\xi], {\mathcal A}_g)$.
By the dimension formulae for Newton polygon and Ekedahl-Oort strata found in Section \ref{Sdimstrata}, one can check that 
$cd_\xi = {\rm codim}({\mathcal A}_{g+1}[\xi^{+1}], {\mathcal A}_{g+1})$.
The generic point of $\overline{\CM}_{g+1}$ is ordinary so, 
by purity \cite{oortpurity}, ${\rm codim}(S_1, \overline{\CM}_{g+1}) \leq cd_\xi$.
So \[{\rm dim}(S_1) \geq 3(g+1)-3 - cd_\xi = {\rm dim}(S) + 3= {\rm dim}(Z) + 1.\]
This shows that only case (ii) can occur and ${\rm codim}(S_1, \overline{\CM}_{g+1})=cd_\xi$.

Furthermore, in case (ii), $S_1$ is not contained in $\Delta_1[\overline{{\mathcal M}}_{g+1}]$.  
Since the points of $S$ represent smooth curves, the generic point of $S_1$ 
is not contained in any other boundary component
and is thus in ${\mathcal M}_{g+1}$.  Thus the generic point of $S_1$ represents a smooth curve.
\end{proof}
  
\mbox{}\\ \subsection{Application}  \label{Sapplication} \mbox{}\\

Here is an application of Theorem \ref{Tinductive}.

Recall that $\xi_4$ is the symmetric Newton polygon in dimension $4$ having slopes $1/4$ and $3/4$.
Note that $\xi_4$ is the largest symmetric Newton polygon in dimension $4$ with $p$-rank $0$.
It follows that $cd_{\xi_4} = 4$.
For $e \in \NN$, recall that $\xi_4^{+e}$ is the symmetric Newton polygon in dimension $4+e$
such that the multiplicity of the 
slopes $0$ and $1$ is $e$ and the multiplicity of the slopes $1/4$ and $3/4$ is $1$.

If $A$ has Newton polygon type $\xi_4$, then its $p$-torsion group scheme 
is the unique symmetric ${\bt}_1$ group scheme $I_{4,1}$ as defined in Example \ref{Eir1}.
So the Ekedahl-Oort type of $\xi_4^{+e}$ is $L^e \oplus I_{4,1}$, where $L = \ZZ/p \oplus \mu_p$.

The following result was proven in \cite[Corollary 5.6]{APgen}.
Recall that $\CM_g^f$ denotes the $p$-rank $f$ stratum of $\CM_g$.
For most pairs $(g,f)$, it is not known whether $\CM_g^f$ is irreducible.

\begin{corollary} \label{Amoduli}
Let $p$ be prime.  If $g \geq 4$, then the Newton polygon $\xi_4^{+(g-4)}$ occurs at the generic point of at least one 
irreducible component of $\CM_g^{g-4}$.
\end{corollary}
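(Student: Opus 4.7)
The plan is to derive Corollary \ref{Amoduli} directly from Theorem \ref{Tinductive}, applied with base Newton polygon $\xi = \xi_4$ in genus $4$ and increment $e = g - 4$. The theorem will produce an irreducible component $S_{g-4}$ of $\CM_g[\xi_4^{+(g-4)}]$ with ${\rm codim}(S_{g-4}, \CM_g) = cd_{\xi_4}$. Since $\xi_4^{+(g-4)}$ has $p$-rank $g-4$ by Notation \ref{Nadde}, this component is contained in $\CM_g^{g-4}$; moreover, its dimension will be $3g - 3 - cd_{\xi_4} = 3g - 7$, which matches $\dim \CM_g^{g-4}$ from Theorem \ref{TFVdG}. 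Hence $S_{g-4}$ must be a full irreducible component of $\CM_g^{g-4}$, and its generic point represents the desired smooth curve.

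The preliminary computation is $cd_{\xi_4} = 4$: by Example \ref{Ecidim}, ${\rm sdim}(\xi_4) = 6$, and since $\dim \CA_4 = 10$, this gives ${\rm codim}(\CA_4[\xi_4], \CA_4) = 4$. The substantive step, and the principal obstacle, is verifying the hypothesis of Theorem \ref{Tinductive} in the base case $g = 4$: one must exhibit an irreducible component $S$ of $\CM_4[\xi_4]$ of codimension exactly $4$ in $\CM_4$, i.e., of dimension $5$. The mere existence of a smooth genus $4$ curve over $\overline{\FF}_p$ with Newton polygon $\xi_4$ is already the hard input; the constructions surveyed in Section \ref{Ssupersingular} yield supersingular examples of different genera and do not apply here. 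This existence is established in \cite[Corollary 5.6]{APgen} by an explicit family construction, and that is where the essential work lies.

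Given such a smooth curve $C$, let $S$ be the irreducible component of $\CM_4[\xi_4]$ containing $[C]$. Purity of the Newton polygon stratification \cite{oortpurity}, applied to the universal $F$-crystal on $\CM_4$, gives ${\rm codim}(S, \CM_4) \leq cd_{\xi_4} = 4$, while the reverse inequality requires a transversality argument on the explicit construction of \cite{APgen} to rule out the codimension dropping to $3$. With the base case secured, one applies Theorem \ref{Tinductive} iteratively $g - 4$ times, producing the required component $S_{g-4}$ of $\CM_g[\xi_4^{+(g-4)}]$ and completing the proof.
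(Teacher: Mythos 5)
Your overall strategy matches the paper's: verify the hypothesis of Theorem \ref{Tinductive} for $\xi_4$ in genus $4$ and then iterate. The computation $cd_{\xi_4}=4$ is right, and the identification of the resulting component with a full component of $\CM_g^{g-4}$ at the end is fine. But there is a gap in your base case: you establish ${\rm codim}(S,\CM_4)\le 4$ by purity and then assert that the reverse inequality ``requires a transversality argument on the explicit construction of \cite{APgen}.'' You neither carry out such an argument nor need one, and as written the codimension could a priori be $3$ or less, which would break the application of Theorem \ref{Tinductive}. The correct (and much softer) argument is the one the paper uses: $\xi_4$ is the \emph{largest} symmetric Newton polygon in dimension $4$ with $p$-rank $0$, so $\CM_4[\xi_4]$ is open in $\CM_4^0$; since it is nonempty, some irreducible component $T$ of $\CM_4^0$ has generic point with Newton polygon $\xi_4$, and by Faber--Van der Geer (Theorem \ref{TFVdG}) every component of $\CM_4^0$ has dimension exactly $2\cdot 4-3+0=5$. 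Hence the component $S$ of $\CM_4[\xi_4]$ dense in $T$ has codimension exactly $4=cd_{\xi_4}$, with no input needed from the particulars of the construction in \cite{APgen} beyond nonemptiness.

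Two smaller points. First, the paper cites \cite[Lemma 5.3(a)]{APgen} (a unitary Shimura variety argument) for the existence of a smooth genus-$4$ curve with Newton polygon $\xi_4$, and also offers an alternative via \cite[Proposition 2.1]{shankartsimerman}: any abelian fourfold with Newton polygon $\xi_4$ is isogenous to a Jacobian of compact type, which must be the Jacobian of an irreducible, hence smooth, curve because $G_{1,3}\oplus G_{3,1}$ is indecomposable as a symmetric $p$-divisible group. Either route is acceptable; your citation of \cite[Corollary 5.6]{APgen} points at the statement being proved rather than at the existence input. Second, once the base case is fixed, your reduction to Theorem \ref{Tinductive} and the dimension count $3g-7=\dim\CM_g^{g-4}$ are exactly as in the paper.
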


\begin{proof}
The first claim is that, for all primes $p$, there exists a smooth curve defined over 
$\overline{\FF}_p$ with genus $4$ whose Newton polygon is $\xi_4$.
That follows from \cite[Lemma 5.3(a)]{APgen}, where the authors
use the Newton polygon stratification of the moduli space of principally polarized abelian fourfolds 
equipped with an action by $\ZZ[\zeta_3]$ of signature $(3, 1)$. 

Here is an alternative proof of the first claim.
Let $A$ be any $4$-dimensional principally polarized abelian variety over $\overline{\FF}_p$
with Newton polygon $\xi_4$. 
By \cite[Proposition 2.1]{shankartsimerman}, $A$ is isogenous to a Jacobian of a curve $X$ of compact type.
If $X$ were reducible, then ${\rm Jac}(X)$ would decompose as ${\rm Jac}(X_1) \oplus {\rm Jac}(X_2)$, which
would contradict the fact that 
$G_{1,3} \oplus G_{3,1}$ is indecomposable as a symmetric $p$-divisible group.
So $X$ is irreducible, and thus smooth.

By the first claim, $\CM_4[\xi_4]$ is non-empty.  Recall that $\xi_4$ is the largest Newton polygon 
in dimension $4$ with $p$-rank $0$.  Thus there is at least one component of $\CM_4^0$ whose 
generic geometric point has Newton polygon $\xi_4$.
Thus there is a component $S$ of ${\mathcal M}_4[\xi_4]$ such that
${\rm codim}(S, {\mathcal M}_4)={\rm codim}(\CM_4^0, \CM_4) = 4 = cd_\xi$.
The result follows from Theorem \ref{Tinductive}.
\end{proof}

Further applications of Theorem \ref{Tinductive} appear in \cite{LMPT}.

\mbox{}\\ \section{Open problems} \label{Sopen} \mbox{}\\

We end the paper with a few open problems about the $p$-rank $0$ stratum of ${\mathcal M}_g$.
The $p$-rank $0$ stratum of $\CA_g$ is irreducible for $g \geq 3$ by \cite{chaioort}.

\begin{question}
For $g \geq 4$, is the $p$-rank $0$ stratum of $\CM_g$ irreducible?
\end{question}

For the second question, note that the generic geometric point of $\CA_g^0$ represents an abelian variety whose Newton polygon has slopes $1/g$ and $(g-1)/g$
and whose $p$-torsion group scheme is isomorphic to $I_{g,1}$, the unique 
symmetric ${\rm BT}_1$ group scheme of rank $p^{2g}$, $p$-rank $0$, and $a$-number $1$ from Example \ref{Eir1}.

\begin{question} \label{Qopen1}
If $g \geq 5$, does there exist a smooth curve of genus $g$ whose Jacobian:\\
has Newton polygon with slopes $1/g$ and $(g-1)/g$?\\
has $p$-torsion group scheme isomorphic to $I_{g,1}$?
\end{question}

By \cite[Lemma 5.3(a)]{APgen} (see Corollary \ref{Amoduli}), 
the $g=4$ analogue of Question \ref{Qopen1} 
has a positive answer for all $p$.  The answer is also yes when $g = 5$ and $p \equiv 3,4,5,9 \bmod 11$;
and when $g=6$ and $p \equiv 2,4 \bmod 7$ \cite{LMPT}.

The final question is about the existence of a curve of genus $5$ with Newton polygon $\xi_5'$, 
where $\xi_5'$ denotes the symmetric Newton polygon with slopes $2/5$ and $3/5$.

\begin{remark} \label{R:g=5}
Let $A$ be a $5$-dimensional principally polarized abelian variety over $\overline{\FF}_p$
with Newton polygon $\xi'_5$. 
Let $T_5$ denote the intersection of $\CA_5$ with image of the Torelli morphism $T: \overline{\CM}_5 \to \tilde{\CA}_5$.
One can check that ${\rm dim}(\CA_5)= 15$ and ${\rm dim}(\overline{\CM}_5)=12$, so $T_5$ 
has codimension $3$ in $\CA_5$.  Let $x$ be the point of $\CA_5$ representing $A$.
By Example \ref{Ecidim}, the dimension of the isogeny leaf $I(x)$ at $x$ is $i(\xi)=4$.
Also $I(x)$ is proper by \cite[Proposition 4.11]{oortjams}. 
Since $I(x)$ has dimension $4$ and $T_5$ has codimension $3$, it is possible that $I(x)$ intersects $T_5$. 
If this intersection is non-trivial, 
$A$ would be isogenous to the Jacobian of a smooth curve $X$, which would have Newton polygon $\xi_5'$ by construction.
\end{remark}

\begin{question}
With notation as in Remark \ref{R:g=5}, 
does there exist a point $x$ of $\CA_5$, representing an abelian variety with slopes $2/5$ and $3/5$,
such that the isogeny leaf $I(x)$ intersects the Torelli locus $T_5$ in $\CA_5$?
\end{question}


\newcommand{\etalchar}[1]{$^{#1}$}
\providecommand{\bysame}{\leavevmode\hbox to3em{\hrulefill}\thinspace}
\providecommand{\MR}{\relax\ifhmode\unskip\space\fi MR }
\providecommand{\MRhref}[2]{%
  \href{http://www.ams.org/mathscinet-getitem?mr=#1}{#2}
}
\providecommand{\href}[2]{#2}

\end{document}